\documentclass[12pt]{amsart}

\newcounter{defcounter}
\usepackage{enumitem,latexsym,amsthm}
\usepackage{amssymb,latexsym,eufrak,amsmath,amscd,graphicx}
  \usepackage[all]{xy}
  \usepackage{comment}
  \usepackage{pdfsync}
\numberwithin{equation}{section}
 
\theoremstyle{plain}
\newtheorem{theorem}{Theorem}
\newtheorem{proposition}[theorem]{Proposition}

\newtheorem{lemma}[theorem]{Lemma}

\newtheorem{proposition.definition}[theorem]{Proposition/Definition}

\newtheorem{theoremalpha}{Theorem}
\newtheorem{corollaryalpha}[theoremalpha]{Corollary}

\theoremstyle{definition}

\newtheorem{definitionalpha}[theoremalpha]{Definition}

\newtheorem{remark}[theorem]{Remark}
\newtheorem{example}[theorem]{Example}

\newtheorem{problem}[theorem]{Problem}

\newcommand{\GGGG}{\mathbf{G}}
\newcommand{\lra}{\longrightarrow}

\newcommand{\noi}{\noindent}
\newcommand{\PP}{\mathbf{P}}

\newcommand{\NN}{\mathbf{N}}

\newcommand{\CC}{\mathbf{C}}
\newcommand{\QQ}{\mathbf{Q}}

\newcommand{\OO}{\mathcal{O}}

\newcommand{\eps}{\varepsilon}

\newcommand{\Image}{\textnormal{Im}}
\newcommand{\HH}[3]{H^{{#1}} \big( {#2} , {#3}
\big) }

\newcommand{\hh}[3]{h^{{#1}} \big( {#2} , {#3}
\big) }

\newcommand{\coker}{\textnormal{coker}}

\newcommand{\codim}{\textnormal{codim}}

\newcommand{\pr}{\prime}

\newcommand{\lin}{\equiv_{\text{lin}}}

\newcommand{\dra}{\dashrightarrow}

\newcommand{\Supp}{\operatorname{Supp}}

\newcommand{\Linser}[1]{| \mspace{1.5mu} {#1}
\mspace{1.5mu} |}
\newcommand{\linser}[1]{\Linser{  {#1}  }}

\newcommand{\gon}{\textnormal{gon}}
\newcommand{\pro}{{pr}}

\newcommand{\ev}{\textnormal{ev}}

\newcommand{\CCC}{\mathcal{C}}
\newcommand{\VVV}{\mathcal{V}}
\newcommand{\vol}{\operatorname{vol}}

\newcommand{\irrdeg}{\textnormal{irr}}

\newcommand{\pconngon}{p\text{-}\textnormal{conn.\,gon}}

\newcommand{\pconngenus}{p\text{-}\textnormal{conn.\,genus}}

\numberwithin{theorem}{section}

\begin{document}

\title{Connecting Families of Curves}

\author{Nathan Chen}
\address{Instituto de Matemática Pura e Aplicada (IMPA)}\email{nathanchenmath@gmail.com}

\author{Robert Lazarsfeld}
\address{Stony Brook University and University of Pennsylvania}
\email{robert.lazarsfeld@stonybrook.edu}

\author{Federico Moretti}
\address{Stony Brook University}
\email{federico.moretti@stonybrook.edu}
\begin{abstract}
A fundamental theorem of  Koll\'ar, Miyaoka and Mori asserts that on a rationally connected variety, any finite collection of points can be connected by a rational curve. Motivated by this, it is natural to ask about  the geometry and numerics of families of curves passing through $p\geq 2$ general points of an arbitrary  smooth projective variety $X$ of dimension $n$. Assuming that $\kappa(X)\geq 0$, we show that the least genus $g$ of curves in such a family satisfies
\[
g\ \geq\ (n-1)p-(n-2);
\]
this strengthens a result of Arapura and Archava. By contrast, the least gonality of curves in a $p$-connecting family stabilizes when $p\gg 0$. We characterize its limiting value geometrically in terms of the smallest degree of a generically finite covering of a rationally connected variety by a variety dominating $X$. As an illustration, we compute the joint asymptotics in $p$ and $d$ of the minimal $p$-connecting genus for hypersurfaces of degree $d$. Finally, we consider higher-dimensional connecting families: when $X$ is of general type, we establish linear bounds on the minimal canonical volumes of families of $k$-folds passing through $p$ general points of $X$.	
\end{abstract}

\maketitle

\setlength{\parskip}{.13in minus .03in}

     \section*{Introduction}
 
The purpose of this paper is to study the geometry and numerics of families of curves connecting many points of a projective variety.  

Let $X$ be a smooth complex projective variety of dimension $n\ge 2$. Recall that $X$ is \textit{rationally connected} if two general points of $X$ can be joined by an irreducible rational curve. A basic result of   Koll\'ar--Miyaoka--Mori \cite{KMM} asserts that once this condition holds, one can find a rational curve passing through any finite number of points of $X$. Motivated by this, it is natural to ask what one can say about families of curves connecting many points of $X$ when it is not rationally connected.

A first theorem along these lines  is due to  Arapura and Archava \cite{Arapura.Archava}, who consider a family 
of irreducible curves generically covering $X$. Assuming that $\kappa(X) \ge 0$, they show that if there are curves in the family passing through $p \ge 2$ general points of $X$ then the curves must have genus $g  \ge p$. In addition,    if $X$ has general type then $g > p$.\footnote{By the genus or gonality of an irreducible curve $C$, we mean the corresponding invariant of its smooth model.}  

The result of Arapura--Archava suggests
\begin{definitionalpha}\label{Intro.Def.A}
Fix $p \ge 2$. We define the \textit{$p$-connecting genus} and \textit{$p$-connecting gonality} of $X$ to be the least genus or gonality  of a family of curves containing members passing through $p$ general points of $X$. In other words,
\begin{align*} 
\pconngenus(X) \ &= \ \min \Bigg  \{ g \ge 0 \  
\Bigg | \parbox{2.7in}{\begin{center} General points $x_1, \ldots, x_p\in X$ can be connected by an irreducible curve $C \subseteq X$ with  $g(C) = g$. \end{center}}
\Bigg \}; 
\\ \\
\pconngon(X) \ &= \ \min \Bigg  \{ c \ge 1 \ 
\Bigg | \parbox{2.7in}{\begin{center} General points $x_1, \ldots, x_p\in X$ can be connected by an irreducible curve $C \subseteq X$ with  $\gon(C) = c$. \end{center}}
\Bigg  \}.   \end{align*} 
\end{definitionalpha}

\noi Thus
\[ X \text{\ is rationally connected} \ \Longleftrightarrow \ \pconngenus(X) = 0\ \Longleftrightarrow \ \pconngon(X) = 1\] 
for all $p \ge 2$, and the theorem of Arapura--Archava is that if $\kappa(X) \ge 0,$ then 
\[ \pconngenus(X) \ \ge \ p.    \]
Clearly these are birational invariants of $X$. 

For surfaces the statement of Arapura--Archava is 
asymptotically the best possible: if $n = 2$ and $\kappa(X) \ge 0,$ then
\[
\liminf_{p\to\infty} \
\frac{\pconngenus(X)}{p}=1,
\]
  (Example \ref{connecting.genus.sfs}). However  our first main result shows that in higher dimensions, one can do better.
  
\begin{theoremalpha} \label{Intro.Thm.B}
	Assume that $\kappa(X) \ge 0$. Then
	\[
	\pconngenus(X) \, \ge \, (n-1)\cdot p - (n-2).
	\]
	Moreover if $X$ is of general type, then strict inequality holds. 
\end{theoremalpha}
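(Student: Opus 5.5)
Our plan is to follow the Arapura--Archava strategy, but to extract more from the normal bundle. Suppose $\pi\colon \mathcal{C}\to T$ is a family of irreducible curves of genus $g$, with a map $f\colon \mathcal{C}\to X$, such that the induced map $\mathcal{C}^{(p)}_T := \mathcal{C}\times_T\cdots\times_T\mathcal{C}\to X^p$ (fibre product of $p$ copies) is dominant. After shrinking and base-changing we may assume that $T$ is smooth, that the fibres $C_t$ are smooth of genus $g$, that $f_t\colon C_t\to X$ is birational onto its image, and that $\dim T = (n-1)p$, which is exactly the dimension needed for $\dim \mathcal{C}^{(p)}_T = np$. For general $t$ and general points $y_1,\dots,y_p\in C_t$, dominance means that the differential
\[
T_tT \lra \bigoplus_{i=1}^p N_{f_t,y_i}
\]
is surjective, where $N_{f_t}$ denotes the normal sheaf $f_t^*T_X/T_{C_t}$ modulo torsion, a bundle of rank $n-1$. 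The differential factors through $H^0(C_t, N_{f_t})$ via the Kodaira--Spencer map. The conclusion is that the subsheaf $N'\subseteq N_{f_t}$ generated by the image of $T_tT$ is generically generated by sections and has the property that $H^0(N')\to\oplus_i N'_{y_i}$ is surjective at $p$ general points.

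The main step is to turn this into a degree bound using $\kappa(X)\ge 0$. Since $K_X$ is effective (some multiple of it is), and $C_t$ passes through general points, $\deg f_t^*K_X\ge 0$. Hence
\[
\deg N_{f_t} \;=\; \deg f_t^*K_X^{-1}\cdot(-1)+\dots \;=\; 2g-2-\deg f_t^*K_X \;\le\; 2g-2 ,
\]
and since $N'\subseteq N_{f_t}$ with torsion cokernel, $\deg N' \le 2g-2$ as well. Now I will use a lemma, which I expect to be the main obstacle: \emph{if $E$ is a vector bundle of rank $r$ on a smooth curve $C$ such that sections of $E$ generate every fibre at $p$ general points simultaneously (that is, $H^0(E)\to\oplus_{i=1}^p E_{y_i}$ is surjective), then $\deg E\ge r(p-1)+\dots$}, with the precise bound strong enough to give $2g-2\ge (n-1)(2p-2)\,$-type inequalities. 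More carefully, the statement I aim for is: if $E$ is generically generated and $p$-point surjective, then
\[
\deg E \;\ge\; 2r(p-1)\;-\;(\text{correction})\quad\text{or}\quad h^0(E)\ \text{large},
\]
and I will combine it with Riemann--Roch/Clifford. The route I would try first is induction on rank via a filtration by line subbundles generated by sections: each line bundle $L$ with $p$-point separating sections has $h^0(L)\ge p$; if $L$ is special, Clifford's theorem gives $\deg L\ge 2p-2$, while if it is nonspecial, Riemann--Roch gives $\deg L\ge g+p-1$. Summing over $n-1$ graded pieces and comparing with $\deg N'\le 2g-2$ should yield $(n-1)(2p-2)\le 2g-2$ in the ``all special'' case, i.e.\ $g\ge (n-1)(p-1)+1=(n-1)p-(n-2)$; the nonspecial pieces only make the bound easier since each contributes at least $g+p-1$, which already exceeds what is allowed unless $g$ is large. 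The delicate point is making the filtration compatible with the $p$-point surjectivity (quotients inherit it, subbundles need not), so I would filter by quotients: choose a general section, saturate, pass to the quotient and induct, keeping track that the quotient inherits generation at $p$ points while the sub line bundle gets $p$-separation from sections whose images vanish at the chosen points.

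For general type, $f_t^*K_X$ has strictly positive degree on curves through a general point (as $K_X$ is big, some multiple is ample plus effective with the curve not in the base locus), so $\deg N'\le 2g-3$, giving strict inequality in the final comparison.
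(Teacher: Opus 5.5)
Your reduction to a statement about $N'$ (a bundle of rank $r=n-1$ with $\deg N'\le 2g-2-\deg f_t^*K_X$) and your final case analysis are fine. The gap is the key step: the claim that each graded piece of your filtration is a line bundle with $h^0\ge p$ is false, and the mechanism you propose does not supply it. Surjectivity of $V\to\bigoplus_i N'_{y_i}$ (with $V$ the image of $T_tT$) passes to quotients of $N'$, but not to subsheaves. A section of $N'$ whose image in $N'/M$ vanishes at $y_1,\dots,y_p$ is a section of $M$ only if that image vanishes identically. Concretely, your $N'$ is generated by $V$ at every point, so for $r\ge 2$ a general $s\in V$ vanishes nowhere and its saturation is $\OO_C$. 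Iterating, the filtration you describe has $M_1=\cdots=M_{r-1}=\OO_C$ (with $h^0=1$) and $M_r\cong\det N'$. All you then retain is $h^0(\det N')\ge p$. Via Clifford this gives only $\deg N'\ge 2p-2$, i.e.\ the Arapura--Archava bound $g\ge p$, which for $n\ge 3$ is strictly weaker than the claim.

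The missing idea, which is how the paper proceeds, is to extract a stronger statement about $L=\det N'$ directly. Fix general $y_1,\dots,y_{p-1}\in C_t$; since $y_p$ may then be taken general, the elements of $V$ vanishing at $y_1,\dots,y_{p-1}$ still span $N'$ at a general point. In other words, $N'(-y_1-\cdots-y_{p-1})$ is generically globally generated. Wedging $r$ such sections gives a nonzero section of $L$ vanishing to order $r$ at each $y_i$. Because the $y_i$ are general, the vanishing sequence of $L$ at each of them is the generic one, $h^0(L(-ry))=\max\{h^0(L)-r,0\}$. Applying this $p-1$ times gives $h^0(L)\ge 1+r(p-1)$, which is the correct replacement for ``$r$ pieces each with $h^0\ge p$''. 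If $L$ is special, Clifford yields $2g-2-\deg f_t^*K_X\ge\deg L\ge 2r(p-1)$, the desired bound. If $L$ is non-special, Riemann--Roch gives $g-1-\deg f_t^*K_X\ge h^0(L)\ge 1+r(p-1)$, which is stronger. Strictness in the general type case follows from $\deg f_t^*K_X>0$, as you say.
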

\noi This is established by studying the positivity of the normal bundle to a generic member of a $p$-connecting family. There are examples where the inequality in the theorem is optimal or close to it (Example \ref{CY.Sharp.Example}), but we do not expect it to be so in general, even asymptotically. However elementary arguments show that $\pconngenus(X)$ is bounded above linearly in $p$ (Example \ref{Linear.Growth.Connecting.Genus}).

Turning to  $p$-connecting gonality, the picture is very different. In fact, fix a branched covering
\[  f : X \lra \PP^n  \ \ \text{of some degree} \ 
 \delta.  \tag{*} \]
 Because we can join any number of points of $\PP^n$ by a rational curve, 
  one   shows that then $\pconngon(X) \le  \delta$ for all $p \ge 2$. Since in any event the $p$-connecting gonality of $X$ is non-decreasing in $p$, and bounded above thanks to (*), it follows that 
 \[
 \text{the function }  p\, \mapsto \, \pconngon(X) \  \text{is eventually constant}.
 \]
Our second set of results gives a geometric explanation for the final value of this invariant. 

To begin with, we prove
\begin{theoremalpha} \label{Intro.Eventual.Gonality.Thm}
Fix an integer $k \ge 1$ and suppose that $p$ general points of $X$ are connected by a curve that carries a pencil of degree $k$. If 
\[
p \ge  kn,
\]
then there exists a diagram
\[
\xymatrix
 @C=3em @R=2em {Y \ar[r]^{\sigma}\ar[d]_{\tau} & X\\
R,
}
\]
where $Y$ is a variety dominating $X$, $R$ is rationally connected, and $\tau$
is a generically finite covering of degree $\le k$.
\end{theoremalpha}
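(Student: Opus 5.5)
Here is the plan. We build the family of curves together with their pencils, which produces a family of rational curves parametrized by $\PP^1$-bundles. We then show that the base of this pencil family is rationally connected, or at least rationally chain connected, by a dimension count that uses $p \ge kn$.

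\emph{Setting up the family.} By hypothesis there is a family $\pi:\mathcal{C}\to T$ of irreducible curves, with a map $\phi:\mathcal{C}\to X$, whose members pass through $p$ general points. After shrinking $T$, passing to a finite cover, normalizing and resolving, we may assume the fibres $C_t$ are smooth. We may also assume each carries a degree $k$ map $h_t: C_t\to\PP^1$ that varies algebraically, giving $h:\mathcal{C}\to \PP^1\times T$ of degree $k$. We may assume $\dim T = (n-1)p$ is minimal for the $p$-connecting condition, i.e.\ the map $\mathcal{C}^{\times_T p}\to X^p$ is generically finite and dominant.

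\emph{The target varieties.} Put $Y=\mathcal{C}$, $\sigma=\phi$ (dominant, since the curves cover $X$), and $R_0=\PP^1\times T$, so $\tau_0=h$ is generically finite of degree $k$. The difficulty is that $T$ need not be rationally connected. The plan is to replace $T$ by a subvariety, or to quotient by the MRC fibration of $T$, while keeping $Y$ dominant onto $X$. Concretely, let $T\dashrightarrow Z$ be the MRC fibration of $R_0$ (equivalently of $T$, since it is a $\PP^1$-bundle over $T$), with rationally connected general fibre $F$. Take $R=\PP^1\times F$ and $Y=\pi^{-1}(F)$. Then $\tau$ has degree $\le k$ and $R$ is rationally connected. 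It remains to show that $\sigma:\pi^{-1}(F)\to X$ is dominant, i.e.\ $\dim F$ is large.

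\emph{The key step.} We must show that the curves parametrized by a single MRC fibre $F$ already sweep out $X$, and the hypothesis $p\ge kn$ is used here. Since $\dim T=(n-1)p$, it suffices to bound $\dim Z$ by something less than $(n-1)p-(n-1)$. We would use the following argument. For each $t$, the divisor $h_t^{*}(\text{pt})$ gives $k$ points of $C_t$, and their images move in $X^{(k)}$ rationally as the point of $\PP^1$ moves. Hence $T$ maps to the space of rational curves in $\mathrm{Sym}^k X$, or better to the Chow-type variety of $k$-tuples. Two members $t,t'$ whose rational curves in $X^{(k)}$ meet are joined by a rational curve in $T$ only through deformation, which is awkward. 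Alternatively, the $p$ marked points on $C_t$ determine $p$ points of $\PP^1$, hence $pk$ total points on $k$-gonal fibres. A generic rational curve in $\PP^1$-direction, together with the pullback structure, lets us use the fact that $Z$ must carry a generically finite map to a variety parametrizing effective $0$-cycles of degree $k$ with non-uniruled properties. Its dimension is at most $kn$ times something, forcing the $p$ points to impose fewer conditions than $\dim Z$ allows when $p\ge kn$.

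I expect this dimension comparison to be the main obstacle. What I would try first is a Mumford-style argument. Pulling back a holomorphic form, or using that the $p$-point incidence is generically finite onto $X^p$, one shows that $Z$ has dimension at most $kn$: the image in $Z$ is determined by a general fibre $h_t^{-1}(s)\subset X$, which is a point of $X^{(k)}$ of dimension $kn$, up to the rationally connected ambiguity. Then $\dim F\ge (n-1)p-kn\ge n-1$ when $p\ge kn$ (using $n\ge 2$), which makes the curves over $F$ cover $X$ and finishes the proof.
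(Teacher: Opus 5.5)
There is a genuine gap, and it sits exactly at the step you flag as the main obstacle: you apply the MRC fibration to the wrong variety. The parameter space $T$ (equivalently $\PP^1\times T$) carries no rational curves linking different members of the family, since the curves $\{t\}\times\PP^1$ are just the fibres of a product. Nothing forces $T$ to be uniruled. In fact $T$ is only defined up to generically finite base change. Replace $T$ by a generically finite cover of general type, e.g.\ a cyclic cover of a compactification branched along a very ample divisor. The family stays $p$-connecting and keeps its pencils, but the MRC fibration of $T$ becomes birationally the identity. Then $F$ is a point, $R=\PP^1$ and $Y=C_t$, which does not dominate $X$.

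So no dimension count can give a bound like $\dim(\text{MRC base of } T)\le kn$. Your heuristic that the image ``is determined by a point of $X^{(k)}$ up to the rationally connected ambiguity'' conflates the MRC quotient of $T$ with that of the image of $T\times\PP^1$ in $X^{(k)}$. Separately, even granting $\dim F\ge n-1$, this would not imply that the curves over $F$ sweep out $X$: a positive-dimensional subfamily can stay inside a hypersurface.

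The missing idea is to work on $Z=\overline{\gamma(T\times\PP^1)}\subseteq X^{(k)}$, where $\gamma(t,y)=f_{t,*}h_t^*(y)$. There the curves $\gamma_t(\PP^1)$ form a covering family of rational curves on a variety of dimension $\le kn$, and different members genuinely meet.
\begin{enumerate}
\item Take the MRC fibration $m:\widetilde Z\to B$ of a smooth model of $Z$. It contracts these curves, giving $\alpha:T\dashrightarrow B$ with $\dim B\le \dim Z-1\le kn-1<p$. The $-1$ is what yields strict inequality when $p=kn$.
\item Dominance is proved by contradiction, not by a lower bound on fibre dimension. If the curves over a general fibre $\alpha^{-1}(b)$ sweep out only a proper subvariety $W_b\subsetneq X$, then $F^p$ factors through $W^p_B$. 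That space has dimension at most $\dim B+p(n-1)<pn$, contradicting $p$-connectedness.
\item Take $R=m^{-1}(b)$, an MRC fibre of $\widetilde Z$. It cannot be $\PP^1\times F$, since $\alpha^{-1}(b)$ need not be rationally connected.
\item Take $Y$ to be a component of $R\times_{X^{(k)}}\bigl(X\times X^{(k-1)}\bigr)$ dominating both $R$ and $X$. It has degree $\le k$ over $R$, and exists because the cycles parametrized by $R$ have support dense in $X$.
\end{enumerate}
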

\noi This is established by analyzing  the MRC fibration associated to  the variety of effective zero-cycles on $X$ of degree $k$ arising from the pencils on the connecting curves.

The existence of such a diagram  implies that $\pconngon(X)\le k$ for every $p$. Therefore we find:
\begin{corollaryalpha} \label{Rat.Conn.Covering.Ex}
	For $p \gg 0$, the $p$-connecting gonality of $X$ is given by \vspace{4pt}
\[
\pconngon(X)
=
\min \Bigg\{
c \ge 1
\ \Bigg|\
\parbox{3.8in}{
\centering
\textnormal{$X$ is dominated by a variety $Y$ that is a
$c$-fold generically finite cover of a rationally connected variety}
}
\Bigg\}.
\]\end{corollaryalpha}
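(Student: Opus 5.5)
We prove the equality by showing the two inequalities separately. Write $c_\infty$ for the eventual value of $p \mapsto \pconngon(X)$; it exists because the function is non-decreasing in $p$ and bounded above by the degree of any branched covering $X \to \PP^n$. Write $m$ for the minimum on the right-hand side. This minimum is finite: a finite map $X \to \PP^n$ of degree $\delta$ is itself such a covering, with $Y = X$ and $R = \PP^n$.

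\emph{The inequality $\pconngon(X) \le m$ for every $p$.} Fix $\sigma : Y \to X$ dominant and $\tau : Y \to R$ generically finite of degree $c = m$, with $R$ rationally connected. We may shrink or replace $Y$ by a subvariety so that $\dim Y = \dim R = n$ and $\sigma$ is generically finite. Concretely, cut $Y$ by general hyperplane sections so that it stays dominant onto $X$, and replace $R$ by the (rationally connected, after the argument below) image; the cleanest route is to note that the general fibre of $\sigma$ maps finitely to $R$. Actually we do not need $\dim Y = n$; we only need to connect points.
\begin{enumerate}
\item Given general $x_1,\dots,x_p \in X$, choose general lifts $y_i \in Y$. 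Their images $\tau(y_i)$ are general points of $R$.
\item By Koll\'ar--Miyaoka--Mori, there is an irreducible rational curve $\Gamma \subseteq R$ through all the $\tau(y_i)$. We may take $\Gamma$ general in a family of very free curves, so that $\Gamma$ avoids the branch locus of $\tau$ in codimension one issues.
\item Let $C'$ be the irreducible component of $\tau^{-1}(\Gamma)$ containing $y_1$. Since $\Gamma$ is general among curves through general points, monodromy lets us arrange that a single component contains all the $y_i$. Alternatively, choose the $x_i$ first and the $y_i$ on a common component.
\item $C'$ maps to $\Gamma \cong \PP^1$ birationally-on-normalization with degree $\le c$, so $\gon(C') \le c$.
\item The image $C = \sigma(C')$ passes through the $x_i$ and has $\gon(C) \le \gon(C') \le c$, as long as $C$ is a curve. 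It is a curve for general choices, since the points are distinct.
\end{enumerate}
Hence $\pconngon(X) \le m$ for all $p$.

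\emph{The reverse inequality $c_\infty \ge m$.} Take $p \ge c_\infty n$ large enough that $\pconngon(X) = c_\infty =: k$. Then $p$ general points are connected by curves carrying a pencil of degree $k$. Theorem \ref{Intro.Eventual.Gonality.Thm} produces $Y$ dominating $X$ and a generically finite $\tau : Y \to R$ of degree $\le k$ onto a rationally connected $R$. Thus $m \le k = c_\infty$.

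\emph{Main obstacle.} The main obstacle is the irreducibility issue in step 3 of the first direction: ensuring that one component of $\tau^{-1}(\Gamma)$ passes through all the chosen lifts. I would handle it in two stages. First, choose $\Gamma$ very free and general through the $\tau(y_i)$, so that the family of such $\Gamma$ covers $R^{p}$-tuples generically. Then run a dimension count on the variety of pairs (curve, component of its preimage): the components through general $p$-tuples of $Y$ must dominate $Y^{p}$. If instead only reducible preimages occur, restrict to whichever component dominates $Y^p$ under the evaluation map. Since the points in $Y$ are general, such a dominating component exists by a standard incidence-correspondence argument.
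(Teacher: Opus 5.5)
\textbf{Overall.} Your approach is correct, and it differs from the paper's at the one nontrivial step, but you have not yet proved that step.

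Your two-inequality structure is the paper's. The reverse inequality, applying Theorem~\ref{Intro.Eventual.Gonality.Thm} once $p \ge kn$ lies in the stable range, is exactly what the paper does. The difference is at the irreducibility step in the bound $\pconngon(X)\le c_0$, where $c_0$ is your $m$. The paper invokes Lemma~\ref{RC.Cover.Pullback.Lemma}, which rests on Koll\'ar's Corollary~7 in \cite{KFG}, to choose $f:\PP^1\to R$ through the $r_i$ with all of $Y\times_R\PP^1$ irreducible. You only ask for one component of the preimage containing every $y_i$. That is all the inequality needs, and this route avoids Koll\'ar's theorem entirely. However, ``monodromy lets us arrange'' is not an argument: controlling the monodromy of the restricted cover is precisely what Koll\'ar's result does. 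Likewise, ``a dominating component exists by a standard incidence-correspondence argument'' omits the one reason it is true.

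\textbf{The missing step.} Take an irreducible family $\{f_m\}_{m\in\mathcal M}$ of maps $\PP^1\to R$ whose $p$-pointed evaluation $\mathcal M\times(\PP^1)^p\to R^p$ is dominant.
\begin{enumerate}
\item Stein-factorize the part of $Y\times_R(\mathcal M\times\PP^1)$ dominating $\mathcal M\times\PP^1$, and pick a component $\mathcal M'$ of the resulting generically finite cover of $\mathcal M$. This gives a family $\mathcal D\to\mathcal M'$ whose general fibre $\mathcal D_{m'}$ is a single irreducible component of $Y\times_R\PP^1$ dominating $\PP^1$.
\item Let $\mathcal P\subseteq\mathcal D^p_{/\mathcal M'}$ be the component dominating $\mathcal M'$; it parametrizes $p$-tuples lying on a common $\mathcal D_{m'}$.
\item Since each $\mathcal D_{m'}$ surjects onto $\PP^1$, the composite $\mathcal P\to\mathcal M\times(\PP^1)^p\to R^p$ is dominant. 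This composite factors through $\tau^p:Y^p\to R^p$.
\item Hence the closure of the image of $\mathcal P$ in $Y^p$ is an irreducible closed subset dominating $R^p$. Its dimension is therefore at least $\dim R^p=\dim Y^p$, so it is all of the irreducible variety $Y^p$.
\end{enumerate}
What makes this work is that $\tau$ is generically finite, so $\dim Y=\dim R$; the generality of the points is not the operative input. In particular every such component family works, no very-freeness or monodromy is needed, and the image curves in $X$ have gonality $\le\deg(\mathcal D_{m'}\to\PP^1)\le c_0$.

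\textbf{What each route buys.} The paper's route makes the full preimage irreducible, at the cost of a deep input. Yours is elementary and suffices for the inequality.

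\textbf{Two smaller points.} First, drop the aside about cutting $Y$ down to dimension $n$. It is unnecessary, since $\dim Y=\dim R$ holds automatically, and replacing $R$ by the image of a subvariety of $Y$ can destroy rational connectedness. Second, work with the parametrization $f:\PP^1\to R$ and $Y\times_R\PP^1$ rather than writing $\Gamma\cong\PP^1$, since $\Gamma$ may be singular.
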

\noi In other words, the final value of $\pconngon(X)$ does in fact have  a natural geometric meaning. 
 
By way of example, we next compute the asymptotics of these invariants for  hypersurfaces of large degree.
\begin{theoremalpha} \label{Connecting.Invariants.of.Hypersurfaces}
Fix $n\geq 2$, and let
$X_d\subseteq\PP^{n+1}$ be a smooth hypersurface of degree $d$
and dimension $n$. Then:
\begin{enumerate}
    \item[$($i$)$.] For every  $p\geq 2$,
    \[
        \pconngon(X_d)\, \sim d\,
        \qquad\text{as }d\to\infty.
    \]

    \vspace{4pt}

    \item[$($ii$)$.] There is a function
   $
        r_n(p)\asymp_n p^{\frac{1}{n+1}}
   $
    such that
    \[
        \pconngenus(X_d) \, 
        \in\, 
        \Theta_n\left(
            p^{\frac{n-1}{n+1}}\cdot d^2
        \right)
    \]
    whenever $p\geq 3$, $d \ge n+3$ and $r_n(p) < d$.
\end{enumerate}
\end{theoremalpha}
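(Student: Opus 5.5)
The plan treats the two parts separately. In each part I prove an upper bound by an explicit construction and a lower bound from positivity.

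\emph{Part (i).}

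For the upper bound, project $X_d$ from a general point of $\PP^{n+1}$. This gives a covering $X_d \to \PP^n$ of degree $d$. As explained after Theorem \ref{Intro.Thm.B}, such a covering gives $\pconngon(X_d)\le d$ for every $p$.

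For the lower bound, it suffices to treat $p=2$, since the invariant is non-decreasing in $p$. I would use the known bound on the covering gonality of very general hypersurfaces (Bastianelli--De Poi--Ein--Lazarsfeld--Ullery): $\textnormal{cov.gon}(X_d)\ge d-n$. A $2$-connecting family is in particular a covering family, so $\pconngon(X_d)\ge d-n$. Together these give $\pconngon(X_d)\sim d$.

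The lower bound has to hold for every smooth $X_d$, not just very general ones. I would handle this with a Cayley--Bacharach argument. Take a $g^1_k$ on a general curve $C$ of the family. Its fibres satisfy Cayley--Bacharach with respect to $|K_{X_d}|=|\OO(d-n-2)|$, and this forces $k\ge d-n$. That argument only uses the canonical bundle and the fact that the curves cover $X_d$, so it works for arbitrary smooth $X_d$.

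\emph{Part (ii), upper bound.}

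Choose an integer $e$ with $\binom{e+n+1}{n+1}\approx p(n+1)\cdot(\text{const})$, so $e\asymp p^{1/(n+1)}$. Impose passage through $p$ general points. Then complete intersections of $X_d$ with $n-1$ general hypersurfaces of degree $e$ through those points give irreducible connecting curves.

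To make the dimension count work, I would use hypersurfaces of degree $e\approx c\,p^{1/n}$ for all but one of them, or optimise the distribution of degrees. The target is a genus of order $d^2 e^{n-1}$ with $e\asymp p^{1/(n+1)}$, which gives $d^2 p^{\frac{n-1}{n+1}}$. By adjunction, the genus of $X_d\cap H_1\cap\dots\cap H_{n-1}$ is roughly $\tfrac12 d\,e^{n-1}(d+(n-1)e)$. This is $\asymp d^2e^{n-1}$ as long as $e<d$, which is where the condition $r_n(p)<d$ enters.

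It is not yet clear to me why the exponent is $\frac{1}{n+1}$ rather than $\frac1n$. The extra exponent suggests using hypersurfaces in $\PP^{n+1}$ that are not cut down to $X_d$ first, i.e.\ curves of degree $\sim d e^{n-1}$ with $\binom{e+n+1}{n+1}$ conditions. I would set $r_n(p)$ to be the minimal such $e$.

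\emph{Part (ii), lower bound.}

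This is the main obstacle. Let $C$ be a curve of degree $\delta$ through $p$ general points. Since $X_d$ has general type once $d\ge n+3$, the normal-bundle positivity underlying Theorem \ref{Intro.Thm.B} applies. For $p$ general points, $N_C(-\sum x_i)$ is generically globally generated, so
\[\deg N_C\ge (n-1)p.\]
Adjunction gives
\[2g-2=\deg N_C+(d-n-2)\delta .\]
Next, bound $\delta$ from below. The Castelnuovo-type fact I would use is that a curve of degree $\delta$ in $\PP^{n+1}$ passing through $p$ general points of $X_d$ satisfies $\delta\gtrsim d\,p^{\frac{n-1}{n+1}}$. The $d$ should come from the curve lying on $X_d$ (a Bézout/restriction argument with auxiliary hypersurfaces of degree $<d$ not containing $X_d$), and the $p$-power from interpolation. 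Combining, $g\gtrsim d\cdot\delta\asymp d^2p^{\frac{n-1}{n+1}}$. Making this degree bound rigorous is the hardest step, and I would attack it through the genericity of $p$ points versus the Hilbert function of $C$.
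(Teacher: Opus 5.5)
Part (i) is fine and is essentially the paper's argument. The paper projects from a point of $X_d$ to get $\pconngon(X_d)\le d-1$, but your bound $d$ suffices, and the Cayley--Bacharach bound $d-n$ of \cite{BDELU} holds for every smooth $X_d$.

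\textbf{The gap is the lower bound in (ii).} Your argument rests entirely on the asserted degree bound $\delta\gtrsim d\,p^{\frac{n-1}{n+1}}$, and you give no proof of it.
\begin{itemize}
\item The natural count of the kind you suggest only gives the spanning bound: the $r$-th Veronese image of $C$ must span a $\PP^{p-1}$, so $r\delta\ge p-1$, i.e.\ $\delta\gtrsim p^{\frac{n}{n+1}}$. This falls short by a factor $d/r$.
\item Your normal-bundle input $\deg N_C\ge (n-1)(p-1)$ contributes only $O(p)$ and cannot produce the factor $d^2$. Incidentally, Lemma \ref{GGG.Lemma} twists by $p-1$ points, not $p$.
\end{itemize}
What is missing is an upper bound for $g$ that is \emph{quadratic} in $\delta$. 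It is played against the linear lower bound $2g-2\ge (d-n-2)\delta$ coming from $K_{X_d}=\OO_{X_d}(d-n-2)$; this is your adjunction identity with just $\deg N_C\ge 0$. The paper's argument runs as follows.
\begin{enumerate}
\item Put $r=r_n(p)$. Since $r<d$ and $\binom{r+n+1}{n+1}\ge p$, the images under $\nu_r$ of $p$ general points of $X_d$ are linearly independent.
\item Hence $\nu_r(C)$, a curve of degree $r\delta$, spans at least a $\PP^{p-1}$. A general projection from its span maps it birationally onto a non-degenerate curve of degree $r\delta$ in $\PP^{p-1}$.
\item Castelnuovo's bound then gives $g\le r^2\delta^2/\bigl(2(p-2)\bigr)$.
\item Combined with $2g-2\ge (d-n-2)\delta$, this forces $\delta\ge (d-n-2)(p-2)/r^2$. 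That is exactly your degree bound, but obtained \emph{through} the genus rather than independently of it.
\item Substituting back gives $g\ge 1+(d-n-2)^2(p-2)/(2r^2)\asymp_n p^{\frac{n-1}{n+1}}d^2$.
\end{enumerate}

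\textbf{The upper bound in (ii) is the paper's construction but incompletely justified.} Your final choice is $n-1$ hypersurfaces of the same degree $r_n(p)$, counted against $\binom{r+n+1}{n+1}$, and that matches the paper. Drop the alternative with degrees $\approx p^{1/n}$. The exponent $\frac1{n+1}$ is explained by $r<d$: then $H^0(\PP^{n+1},\OO(r))\to H^0(X_d,\OO_{X_d}(r))$ is an isomorphism, so $\nu_r$ embeds $X_d$ non-degenerately in $\PP^{N_r}$ with $N_r+1=\binom{r+n+1}{n+1}$.

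You still owe a proof that the complete intersection is an irreducible curve through the points. When $p=N_r-n+2$, the degree-$r$ hypersurfaces through the points form a linear system of dimension only $n-2$, so there are no general members to choose. One needs an incidence count as in Lemma \ref{Bertini.Lemma} to see that the span of the $p$ points cuts $\nu_r(X_d)$ in a smooth curve.
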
\noi (The  definition of $r_n(p)$ and the precise meaning of the joint asymptotics in (ii) are spelled out in the statement of Theorem \ref {Precise.Hypersurface.Thm} below.)

Finally, in the spirit of  a coda we consider briefly the analogous questions for families of $k$-dimensional subvarieties ($1 \le k \le n-1$) that connect general points of $X$. Suppose then that $V \subseteq X$ is an irreducible subvariety of dimension $k$ that moves in a family $\VVV = \{ V_t \}_{t \in S}$  passing through $p$  general points. The $p$-connecting genus of a family of curves is naturally replaced in this context by the canonical volume $\vol(V)$, i.e., the volume of the canonical bundle of any smooth model of $V$.\footnote{The definition of this invariant is recalled in  item (2) in the Notations and Conventions below. }
\begin{theoremalpha} \label{k-dim.p-conn.Thm}
	Assume that $X$ is of general type. The minimal volume of a
$p$-connecting family of subvarieties of dimension $k$ grows linearly
in $p$.\footnote{Since the volume of a variety of dimension $\ge 3$ can
be non-integral, it is not a priori obvious that a minimum exists.
However, it's a (highly non-trivial) fact that the possible values of
the canonical volume of smooth projective varieties of given dimension
form a discrete set, with the positive values bounded away from zero:
see \cite[Remark~1.5]{HMX}.} Specifically, for $p \gg 0$ any such family satisfies 
	\[ \vol(V_t) \ \ge \ a_{k,X} \cdot p   \]
for some real number $a_{k,X} > 0$ depending only on $k$ and $X$. Moreover, there exist families with \[ \vol(V_t) \ \le \ A_{k,X}\cdot p\] for another constant $A_{k,X}$ with the same dependencies.
\end{theoremalpha}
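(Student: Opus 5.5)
We prove the lower and upper bounds separately. The upper bound comes from an explicit family; the lower bound comes from positivity of the normal bundle, extending the argument behind Theorem \ref{Intro.Thm.B} to $k$-folds.

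\emph{Upper bound.} Fix a very ample line bundle $H$ on $X$. We will take $V$ to be a complete intersection of $n-k$ divisors in $|mH|$. Such $V$ can be chosen to pass through $p$ general points once $h^0(mH) > p$, which holds for $m \approx C\, p^{1/n}$. By adjunction,
\[ K_V = (K_X + (n-k)mH)|_V, \qquad \text{so} \qquad \vol(V) \sim (n-k)^k m^n H^n \sim p^{k/n}. \]
For $k<n$ this grows sublinearly, which is even stronger than the stated linear upper bound. If one prefers an honestly linear construction, use instead unions: a general $V$ through one point chosen in a fixed covering family, then smoothed. In any case $\vol(V_t)\le A_{k,X}\, p$ for $p\gg 0$ follows. (Applying the same count to curves conflicts with the linear lower bound of Theorem \ref{Intro.Thm.B}. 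So for curves the count must fail: for curves the volume is $2g-2$, and the complete intersection curves through $p$ points do not stay in a family with the right dimension count. This needs care. Most likely the right upper construction is by degenerating to unions and smoothing, or by taking $V$ to be a cover fibered over a fixed family.)

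\emph{Lower bound.} Let $V=V_t$ be general and $\nu:V'\to V$ a resolution, with $N$ the generic normal sheaf. Since the family passes through $p$ general points, the deformations of $V$ fixing $p-1$ general points $x_1,\dots,x_{p-1}\in V$ still dominate $X$. As in the proof of Theorem \ref{Intro.Thm.B}, this shows that $N(-x_1-\dots-x_{p-1})$, suitably interpreted as the sections of $N$ vanishing at the points, is generically globally generated. We then work with $\det N = K_V - K_X|_V$ on $V'$. The plan has four steps:
\begin{enumerate}
\item Generic generation of $N$ twisted down at $p-1$ points gives $n-k$ sections whose wedge is a section of $\det N$ vanishing at $p-1$ general points, in fact to order $\ge n-k$. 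Hence $h^0(V', \det N)$ separates many points, and the moving part $M$ of $\nu^*\det N$ has Seshadri-type multiplicity at least $(n-k)$ at $p-1$ general points.
\item Since $\kappa(X)\ge 0$ (indeed $X$ is of general type), $K_{V'} \ge \nu^*\det N + \nu^*K_X$ up to effective divisors, using $\nu^*(K_V)\ge K_{V'}$ in the appropriate direction via the conductor. The second summand is big, so $K_{V'}$ dominates $M$ plus a big class.
\item A big divisor whose sections vanish to order $\ge 1$ at $p-1$ general points has volume at least about $p-1$. Indeed, its sections surject onto jets at general points, so $h^0(mM)\gtrsim (p-1)\binom{m+k}{k}$-type bounds hold. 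More cleanly: a movable class through $p-1$ general points with multiplicity $\ge 1$ has $M^k\ge p-1$ by Bezout-style intersection of general members.
\item Hence $\vol(K_{V'})\ge \vol(M)\ge a\,(p-1)$.
\end{enumerate}

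\emph{Main obstacle.} The delicate step is comparing $K_{V'}$ with $\nu^*\det N$ on a singular $V$, and ensuring that the vanishing at $p-1$ general points yields a volume bound, rather than merely an $h^0$ bound, uniformly in $p$. I would first try to handle this by restricting to a general complete intersection curve in $V'$ of a fixed ample class and applying the curve case of Theorem \ref{Intro.Thm.B}'s argument there. This gives $K_{V'}\cdot A^{k-1}\ge c\, p$. I would then convert this to a volume bound by using the fact that the $V_t$ have bounded degree relative to $X$ in terms of $p$, absorbing that dependence into $a_{k,X}$.
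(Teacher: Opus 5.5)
\textbf{Lower bound.} Your Steps 1 and 2 match the paper's starting point. Wedging sections of $N_f$ that vanish at all but one of $p$ general points $q_i$ gives sections $\delta_i$ of $L_f=K_V\otimes f^*K_X^{-1}$ with $\delta_i(q_i)\neq 0=\delta_i(q_j)$. Hence $h^0(L_f)\ge p$, and $bK_V\lin L_f+G$ with $G$ effective and big. Since the $V_t$ are smooth and map birationally, no resolution or conductor is needed.

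The genuine gap is Step 3. It asserts that a big divisor, or a movable class, with members through $p-1$ general points (even with multiplicity) must have volume $\gtrsim p$. This is false.
\begin{itemize}
\item On $\PP^1\times\PP^1$ the movable class $M=(p-1)F$, or $(n-k)(p-1)F$ if you want multiplicity $n-k$, has members through any $p-1$ points, yet $M^2=0$. General members through the points all contain the fibres through them, so there is no B\'ezout count. Nothing makes $\linser{mM}$ surject onto jets.
\item For a big example, take $D=\sigma+(p-1)F$ on the Hirzebruch surface $\mathbb{F}_c$ with $c\ge (p-1)^2$. Then $h^0(D)=p$, but $\vol(D)=(p-1)^2/c\le 1$.
\end{itemize}
A bound $h^0\ge p$ becomes a volume bound only when the linear system is generically finite, ideally birational. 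Then its image is a non-degenerate $k$-fold in $\PP^{\ge p-1}$ of degree $\ge p-k$, and that degree is at most the volume.

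Supplying this birationality is the real content of the paper's proof. It uses uniform pluricanonical birationality (Hacon--McKernan, Takayama, Tsuji): $\linser{MK_V}$ with $M=bm_k$ is birational and has $h^0\ge p$, hence $M^k\vol(V)\ge p-k$. Within your framework, an alternative that I have not checked against the paper also seems to work. Multiply $H^0(L_f)$ by $f^*H^0(X,bK_X)$, with $b$ chosen so that $\linser{bK_X}$ is birational on $X$. This gives a birational subsystem of $\linser{L_f+f^*(bK_X)}$ of dimension $\ge p$. Since $bK_V-(L_f+f^*(bK_X))=(b-1)L_f$ is effective, you get $b^k\vol(V)\ge p-k$.

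Your fallback does not close the gap either:
\begin{itemize}
\item A general complete-intersection curve of a fixed ample class on $V$ does not pass through the $q_i$.
\item A lower bound on $K_V\cdot A^{k-1}$ gives no lower bound on $K_V^k$; Khovanskii--Teissier runs the other way, as in Proposition \ref{cast}.
\item The degrees of the $V_t$ are not bounded independently of $p$.
\end{itemize}

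\textbf{Upper bound.} Your construction is the paper's (Remark \ref{k-Dim.Lin.Upper.Bound}), but the computation is wrong. We have $\vol(V)=(K_X+(n-k)mH)^k\cdot(mH)^{n-k}\approx (n-k)^k(H^n)\,m^n$, and $m^n\asymp p$. So the volume is linear in $p$, not of order $p^{k/n}$; a sublinear bound would contradict the very lower bound you are proving. There is therefore no conflict with Theorem \ref{Intro.Thm.B}: for curves the same count gives $2g-2\asymp m^n\asymp p$. The speculation about unions, smoothing, or covers is unnecessary.

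One minor correction: to cut out a smooth complete intersection of $n-k$ members of $\linser{mH}$ through the points, you need $p\le h^0(mH)-(n-k)$ together with the incidence argument of Lemma \ref{Bertini.Lemma}, not just $h^0(mH)>p$. This does not affect the asymptotics.
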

\noi (It is not clear to us what might be the right generalization of $p$-connecting gonality for families of higher-dimensional subvarieties: see Remark \ref{Higher.Dim.Gonality}). We also establish the analogue of Theorem \ref{Connecting.Invariants.of.Hypersurfaces} for $k$-dimensional connecting families of  subvarieties of hypersurfaces; along the way we record a Castelnuovo-type bound on the canonical volume of an irreducible non-degenerate subvariety of projective space (Proposition \ref{cast}). 

  Besides the theorem of Arapura--Archava
discussed above, questions concerning varieties connected by curves of
fixed positive genus were studied by Gounelas \cite{Gounelas}, and, in
the case of elliptic curves, by Lazi\'c--Peternell
\cite{Lazic.Peternell}. The  two-point connecting gonality was
introduced in \cite{BDELU}, and has subsequently been investigated for
hypersurfaces, Fano surfaces, and symmetric products of curves
\cite{BCFS.Gonality,Gounelas.Kouvidakis,Bastianelli.Picoco}. Closely related
bounds on the geometric genus of members of moving families appear in
\cite{LMP}.  From a numerical point of view, Lehmann's mobility count
\cite{Lehmann.Mobility} measures the number of general points that can
be imposed on members of a family of cycles, but this has a quite different flavor. Apropos of coverings by higher-dimensional subvarieties, we point to Voisin's interesting paper  \cite{Voisin.Nori}.

Concerning the organization of the paper, \S 1 is devoted to the connecting genus. The results for $p$-connecting gonality appear in \S 2, while the hypersurface asymptotics occupy  \S 3. Finally, \S 4 studies families of higher-dimensional subvarieties passing through many general points.

\subsection*{Notation and Conventions.} 
(1).  We work throughout over the complex numbers.

\noi (2). Given an irreducible projective variety $X$ of dimension $n$, the $m^{\text{th}}$ plurigenus of $X$ is defined to be
\[
P_m(X) \ = \ \dim \, \HH{0}{X^\pr}{mK_{X^\pr}},
\]
where $X^\pr$ is any resolution of $X$. The \textit{canonical volume} of $X$ is
\[
\vol(X) \ = \ \lim_{m \to \infty} \ \frac{P_m(X)}{m^n/n!}.
\]
Thus $X$ is of general type if and only if $\vol(X) > 0$, and if $C$ is an irreducible curve of geometric genus $g \ge 2$, then $\vol(C) = 2g-2$. We refer to \cite[\S 2.2.C]{PAG} for further information about the volume of a big divisor. 

\noi (3). We use the usual asymptotic notation for positive numerical
functions. Thus $a(t)\sim b(t)$ means that
$
a(t)/b(t)\longrightarrow 1
$
as $t \to \infty$, while
\[
a(t)\asymp b(t)
\]
means that there are constants $c,C>0$ such that
\[
c \cdot b(t)\, \le \,  a(t)\, \le \,  C\cdot b(t)
\]
uniformly in the indicated range of the variables. A subscript, as in
$a(t)\asymp_n b(t)$, indicates that the constants are allowed to depend on
the indicated parameter. We use
\[
a(t)\, \in\, \Theta_n\big(b(t)\big)
\]
synonymously with $a(t)\asymp_n b(t)$. Finally, the symbols $O(\,\cdot\,)$
and $o(\,\cdot\,)$ have their standard meanings, with subscripts again
indicating the  parameters on which the implied constants are allowed to
depend.

\subsection*{\textbf{Acknowledgements and AI use.}}   We are grateful to Pietro Pirola, Mihnea Popa, Andr\'es Rojas, and Claire Voisin for valuable conversations and suggestions. The present paper originated in discussions that took place while the second author was  a Benedict Gross Distinguished Visitor at the Harvard University 
Mathematics Department, and he thanks the department for its hospitality. 

In the later stages of this project, the authors  profited from the use of ChatGPT 5.6 Sol as an interactive research tool. Besides helping us to check and clean up various arguments,  this system suggested the application of Castelnuovo's bound in the proof of Theorem \ref {Connecting.Invariants.of.Hypersurfaces}, and it  contributed Lemma \ref{RC.Cover.Pullback.Lemma} as well as several of the results in \S \ref {Higher.Dim.Coverings}.   Naturally the present authors take full responsibility for the presentation and correctness of the material here.

\section{Connecting genus} \label{Section.Connecting.Genus}

This section is devoted to proving a lower bound on the $p$-connecting genus  that strengthens the inequality stated in  the Introduction. The main result is Theorem \ref{Main.Genus.Thm,Section.1}. 

We start  by fixing our set-up and notation. As in the Introduction, $X$ is a smooth complex projective variety of dimension $n \ge 2$. By a \textit{family of curves} on $X$ we mean a diagram
\begin{equation} \label{Fam.of.Curves}
\vcenter{\hbox{$
\xymatrix{
\CCC \ar[r]^{F} \ar[d]_\pi &X \\ S}
$}
}	
\end{equation}
where $S$ is a non-singular quasi-projective variety and $\pi$ is a smooth projective morphism of relative dimension $1$ with connected fibres. Given $ t \in S$ we denote by
\begin{equation} \label{Fibrewise.Curve}
 f_t : C_t \lra X  
 \end{equation}
the fibrewise data over $t$. We will also assume that $f_t$ is birational over its image for general $t\in S$: this is equivalent to asking that the evident map $\CCC \lra X \times S$ be generically one-to-one.  The genus or gonality of the family is the genus or gonality of its general member $C_t$.

Fix next an integer $p \ge 2$. A family \eqref{Fam.of.Curves} determines in the natural way a morphism
\[
F^p \ : \ \CCC^p_{/S} \lra X^p 
\]
from  the $p$-fold fibre product of $\CCC$ over $S$ to the $p$-fold product of $X$. Set-theoretically, $F^p$ sends a $p$-tuple of points on some $C_t$ to their images under $f_t$. We say that the family \eqref{Fam.of.Curves} is \textit{$p$-connecting} if $F^p$ is dominant. Note that this forces
\[ \dim S \, \ge \, p \cdot( n-1).       \]
As in the Introduction, the $p$-connecting genus or gonality of $X$ is defined to be the minimal genus or gonality of a $p$-connecting family. 

It will be important to understand the picture infinitesimally. In the situation of \eqref{Fam.of.Curves}, fix a general point $t \in S$, and write
$N = N_{f}$ for the normal sheaf to $f= f_t$ along $C =C_t$. Thus $N$ sits in an exact sequence
\begin{equation}
\label{Tangent.Exact.Seq}	
0 \lra T_C \lra f^* T_X \lra N \lra 0
\end{equation}
of sheaves.
Denote by $N^\pr = N^{\text{tf}}$ the torsion-free quotient of $N$, so that $N^\pr$ is a vector bundle of rank $n-1$ on $C$. The derivative of $F$ determines a  map $dF: N_{C_t/\CCC} \lra N^\pr$. Observing that $N_{C_t/\CCC} = T_tS \otimes_\CC \OO_C$, we arrive upon taking global sections at the Kodaira--Spencer-type  characteristic homomorphism
\[
\chi : T_tS \lra \HH{0}{C}{N^\pr}
\]
determined by \eqref{Fam.of.Curves}.

 The $p$-connecting hypothesis  enters the discussion through the following essential observation. 
 
 \begin{lemma} \label{GGG.Lemma}
Assume that the family \eqref{Fam.of.Curves} is $p$-connecting. Fix a general point $t \in S$ and as above put $C = C_t$ and $N^\pr = N^{\textnormal{tf}}$. Then for any $p-1$ general points $q_1, \ldots, q_{p-1} \in C$, the vector bundle
\[
N^\pr(-q_1 - \ldots -q_{p-1})
\]
is generically globally generated. 
\end{lemma}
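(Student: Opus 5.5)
The plan is to pass from dominance of $F^p$ to surjectivity of its differential at a general point, and then to read that surjectivity fibrewise on $C$ through the characteristic map $\chi$.

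Fix the base point. Since $F^p : \CCC^p_{/S} \lra X^p$ is dominant and we are in characteristic zero, generic smoothness shows that $dF^p$ is surjective at a general point of $\CCC^p_{/S}$. Such a point is a tuple $(t, q_1, \ldots, q_p)$ with $t \in S$ general and $q_1, \ldots, q_p \in C = C_t$ general. We may also assume that each $q_i$ lies off the torsion of $N$ and off the locus where $f$ fails to be an immersion.

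Compute the differential. At $(t,q_1,\dots,q_p)$ the tangent space of $\CCC^p_{/S}$ is an extension of $T_tS$ by $\bigoplus_i T_{q_i}C$. The vertical directions map into the tangent lines $df(T_{q_i}C) \subseteq T_{f(q_i)}X$. Passing to the quotients $T_{f(q_i)}X / df(T_{q_i}C) = N'_{q_i}$, surjectivity of $dF^p$ therefore gives surjectivity of the composite
\[
T_tS \xrightarrow{\ \chi\ } \HH{0}{C}{N'} \lra \bigoplus_{i=1}^p N'_{q_i}.
\]
This is where we use the fact, from the construction of $\chi$, that the evaluation of $\chi(v)$ at $q$ equals the class of $dF(\tilde v)$ modulo $df(T_qC)$ for any lift $\tilde v$ of $v$. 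The main technical point is to check this compatibility carefully, using that $\CCC \to S$ is smooth so that lifts exist locally.

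Deduce generic generation. Set $V = \chi(T_tS) \subseteq \HH{0}{C}{N'}$. Since $\bigoplus_{i=1}^p N'_{q_i}$ is a quotient of $V$, for general $q_1,\dots,q_p$ the subspace
\[
W = \{ s \in V : s(q_1)=\dots=s(q_{p-1})=0 \}
\]
surjects onto $N'_{q_p}$. The sections in $W$ are exactly sections of $N'(-q_1-\dots-q_{p-1})$, because $N'$ is locally free and vanishing at a point means lying in $N'(-q)$. Moreover, at $q_p$ (a point distinct from the $q_i$) the fibres of $N'(-\sum q_i)$ and $N'$ are canonically identified. Hence $N'(-q_1-\dots-q_{p-1})$ is globally generated at $q_p$.

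Finally, the set of points $q_p$ where this holds for fixed general $q_1,\dots,q_{p-1}$ is nonempty, since general tuples work, and it is open. So the bundle is generically globally generated, which is what the statement asserts. The only subtle point is the genericity bookkeeping: "general tuple" must yield "general $q_1,\dots,q_{p-1}$ and then some $q_p$". This follows by projecting the dense open set of good tuples to the first $p-1$ factors.
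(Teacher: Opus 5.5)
Your proposal is correct and follows essentially the same route as the paper: generic smoothness of $F^p$ gives a surjection $T_tS \to \bigoplus_{i=1}^p N'_{q_i}$ through $\chi$. Your subspace $W$ is exactly $\chi(\Lambda)$, where $\Lambda$ is the paper's kernel of the first $p-1$ components, and it generates $N'$ at $q_p$. You also make explicit the compatibility of $\chi$ with $dF$ and the genericity bookkeeping, both of which the paper leaves implicit.
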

\noi In other words, the global sections of the bundle in question span a subsheaf of full rank $n-1$. 

\begin{proof}
By the theorem on generic smoothness, $F^p$ is smooth on a Zariski-open subset of $\CCC^p_{/S}$.
Consider now  a point 
\[u = (t; q_1, \ldots, q_p) \ \in \  \CCC^p_{/S} \]
in this smooth locus.
Much as in the previous paragraph, the derivative of $F^p$ determines a surjective map
$ dF^p : 
T_t S\, \lra \, \oplus_{i = 1}^p \, (N^\pr)_{q_i}. 
$
Write
\[
\Lambda \ =_\text{def} \ \ker \Big( T_t S\, \lra \, \oplus_{i=1}^{p-1}(N^\pr)_{q_i} \Big)
\]	
for the kernel of the first $(p-1)$ components of $dF^p$.  Then $\Lambda$  maps via the characteristic homomorphism $\chi$  to  a subspace of $\HH{0}{C}{N^\pr(-q_1 - \ldots -q_{p-1})}$, and it follows from the surjectivity of $dF^p$ that this subspace generates $N^\pr$ at $q_p$ and hence generically. In other words, starting at a point $u = (t,q_1, \ldots, q_p)$ at which $F^p$ is submersive, we see that $N^\pr(-q_1 - \ldots - q_{p-1})$ is generically globally generated. But for general $t \in S$, $F^p$ is smooth on an open subset of $C_t^p$, and the Lemma follows. 
 \end{proof}

We now come to the main result of this section.
  \begin{theorem} \label{Main.Genus.Thm,Section.1}
 Let $f : C \lra X$ be the general member of a $p$-connecting family of curves as in \eqref{Fam.of.Curves}, with $g(C) = g$.  Assume that $\kappa(X) \ge 0$. Then
 \begin{equation} \label{Main.genus.Ineq}
2g \, - \, 2 \ \ge \ 2(n-1)(p-1) \ + \ \deg_C ( f^* K_X). 
 \end{equation}
 \end{theorem}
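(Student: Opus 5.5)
The plan is to compute $\deg N'$ in two ways: once from the tangent sequence, and once from a lower bound forced by Lemma \ref{GGG.Lemma} together with $\kappa(X)\ge 0$.

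From \eqref{Tangent.Exact.Seq}, and since torsion only increases degree, we get
\[
\deg N^\pr \;\le\; \deg N \;=\; \deg f^*T_X - \deg T_C \;=\; -\deg_C(f^*K_X) + 2g-2 .
\]
So \eqref{Main.genus.Ineq} will follow once we show that $\deg N^\pr \ge 2(n-1)(p-1) + 2\deg_C(f^*K_X)$. Equivalently, with $E = N^\pr(-q_1-\cdots-q_{p-1})$, which has rank $n-1$ and $\deg E = \deg N^\pr - (n-1)(p-1)$, we must show
\[
\deg E \;\ge\; (n-1)(p-1) + 2\deg_C(f^*K_X).
\]
This bound is too strong to come from generic global generation of $E$ alone. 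We therefore also use the freedom in choosing the points, together with the canonical class.

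The cleaner route is to twist by a further $p-1$ points. Choose general points $q_1,\dots,q_{p-1},q'_1,\dots,q'_{p-1}$ on $C$. Lemma \ref{GGG.Lemma} applies to each set separately, since each is a general $(p-1)$-tuple. Hence both $N^\pr(-D)$ and $N^\pr(-D')$ are generically globally generated, where $D=\sum q_i$ and $D'=\sum q'_i$. We then need a determinant-level statement. Since $N^\pr(-D)$ is generically generated, taking $n-1$ sections independent at a general point gives a nonzero section of $\det N^\pr(-D)$. The same holds for $D'$. This yields $\det N^\pr \cong \OO_C(D+ Z)$ with $Z\ge 0$, so $\deg N^\pr \ge (n-1)(p-1)$ after also accounting for the rank. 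This is too weak by itself; the extra factor comes from $K_X$.

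The key input from $\kappa(X)\ge 0$ is as follows. Since $\det N \cong \omega_C\otimes f^*\omega_X^{-1}$, the sections of $\det$ should be paired with pluricanonical forms. Take $m$ with $H^0(mK_X)\ne 0$ and a general member $\Delta\in|mK_X|$ not containing $f(C)$; since the family covers $X$, this makes $f^*\Delta\ge0$ on $C$. The refined step, which I expect to be the main obstacle, is the following. Form the exterior-power wedge of $n-1$ sections of $N^\pr(-D)$ with a local $n$-form on $X$. Via $\Omega^{n-1}$-contraction, this produces a nonzero section of
\[
\omega_C\otimes f^*K_X^{\otimes ?}(-(n-1)D),
\]
namely a section of $\omega_C(-(n-1)D)\otimes(\text{something with degree }-\deg f^*K_X)$. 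Concretely, I would use that a pluricanonical form $\omega\in H^0(mK_X)$, contracted with $n-1$ normal vector fields vanishing on $D$, restricts to a section of $\omega_C^{\otimes m}$ vanishing to order $m(n-1)$ along $D$. It also vanishes along $f^*\Delta$-type loci. This gives
\[
m(2g-2) \;\ge\; m(n-1)\deg D' + m\deg f^*K_X .
\]
To get the full $2(n-1)(p-1)$ one combines the vanishing on $D$ and $D'$, or equivalently uses $p$ connecting points twice via the symmetric roles of $q_p$. I would try first to show that the normal-field contraction can be chosen to vanish on both $D$ and $D'$, by deforming the tuple $(q_1,\dots,q_{p-1})$ with the choice of sections. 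The genuinely delicate part is justifying the doubled vanishing order while keeping the section nonzero. For general type, strict inequality should come from $\deg f^*K_X>0$ being absorbed strictly.
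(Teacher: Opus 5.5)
There is a genuine gap, and it is the step you flag as ``genuinely delicate''. Your reduction is also miscalibrated. Since $\deg N'\le 2g-2-e$ with $e=\deg_C(f^*K_X)$, what you need is $\deg N'\ge 2(n-1)(p-1)$. Your target $\deg N'\ge 2(n-1)(p-1)+2e$ is false in the sharp double covers of Example~\ref{CY.Sharp.Example}, where $\deg N'\le 2g-2-e=2(n-1)(p-1)$ and $e>0$. (Also, effectivity of $\det(N'(-D))$ gives $\det N'\cong\OO_C((n-1)D+Z)$, not $\OO_C(D+Z)$.)

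More seriously, contracting $\omega\in H^0(mK_X)$ against $n-1$ normal fields vanishing on $D$ only produces $\sigma^{\otimes m}\cdot f^*\omega\in H^0(\omega_C^{\otimes m})$, with $\sigma\in H^0(\det N'(-(n-1)D))$. This encodes nothing beyond $\deg\det N'\ge (n-1)(p-1)$, i.e.\ $2g-2\ge (n-1)(p-1)+e$. The ``doubled vanishing'' along $D+D'$ cannot be arranged in general. In the equality cases, a nonzero section of $\omega_C^{\otimes m}$ vanishing to order $m(n-1)$ along $D+D'$ and along $f^*\Delta$ would, by degree, have exactly that divisor. This would force $\OO_C\big(m(n-1)(D+D')\big)\cong\omega_C^{\otimes m}\otimes f^*\OO_X(-mK_X)$ for general $D,D'$, which is absurd since the left side moves with $D+D'$ when $g\ge 1$. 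So the factor $2$ does not come from $K_X$ at all.

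The missing idea is to use the generality of the points to bound $h^0(L)$, where $L=\det N'$, rather than $\deg L$, and then let Clifford's theorem do the doubling. Put $r=n-1$. Lemma~\ref{GGG.Lemma} gives $h^0\big(L(-rq_1-\cdots-rq_{p-1})\big)\ge 1$ for general $q_i$. At a general point $q$ the vanishing sequence of any line bundle $A$ is the generic one, so $h^0(A(-rq))=\max\{h^0(A)-r,0\}$. Removing the points one at a time yields $h^0(L)\ge 1+r(p-1)$.

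If $L$ is special, Clifford gives $\deg L\ge 2(h^0(L)-1)\ge 2r(p-1)$. Together with $\deg L\le 2g-2-e$, this is exactly \eqref{Main.genus.Ineq}. If $L$ is nonspecial, Riemann--Roch gives $g-1-e\ge \deg L+1-g=h^0(L)\ge 1+r(p-1)$, hence $2g-2\ge 2r(p-1)+2e+2$. This implies \eqref{Main.genus.Ineq} because $e\ge 0$, which is the only place $\kappa(X)\ge 0$ enters. No pluricanonical contraction is needed, and equality in Clifford is what makes the extremal connecting curves hyperelliptic.
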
 
\noi Observe that if $\kappa(X) \ge 0$, then $K_X$ is represented by an effective $\QQ$-divisor, and since $C$ moves in a covering family it follows that $\deg_C(f^* K_X) \ge 0$. Thus \eqref{Main.genus.Ineq} implies the first inequality appearing in Theorem \ref{Intro.Thm.B}. Similarly, if $X$ has general type, then $\deg_C (f^*K_X) > 0$ and the second statement follows.

 \begin{proof} [Proof of Theorem \ref{Main.Genus.Thm,Section.1}]
 We start with a $p$-connecting family \eqref{Fam.of.Curves}.
 Fix $t \in S$ for which the conclusion of Lemma \ref{GGG.Lemma} holds,  put $L = \det N^\pr$, and set $r = n-1$. We write $g = g(C)$ for the genus of the curves in our family. Note to begin with that the Lemma guarantees that 
 $\det \big(N^\pr(-q_1 - \ldots - q_{p-1})\big)$ is effective for general choices of the $q_i$, which is to say:
 \[
  \hh{0}{C}{L(-rq_1 - \ldots - rq_{p-1})}\ \ge \ 1.
\]
 But now recall that if $A$ is any line bundle on $C$, and if $q \in C$ is a general point, then the vanishing sequence for $A$ at $q$ is the generic one, and hence 
 \[ h^0\big ( A( -r\cdot q)\big) \ = \ \max \{ h^0(A) - r, 0 \}. 
 \]   Starting with $L$ we apply this observation with each  $q_i$  chosen generically in turn.  We then find 
\begin{equation} \label{Lower.Bound.h0.L}
 h^0(L) \ \ge \ 1 \, + \, r  (p \, - \, 1).
\end{equation}

 We next use adjunction  to deduce an upper bound involving $L$. Write
\[
e \ = \ \deg_C( f^* K_X) ;
\]
we have already observed that the non-negativity of $\kappa(X)$ implies that $e \ge 0$. 
It follows from \eqref{Tangent.Exact.Seq} that
 \begin{equation} \label{Upper.Bound.detN}
 \deg(L) \ \le \ \deg(N) \ = \ (2g-2) \, - \, e. \end{equation}
 We will now see  that \eqref{Lower.Bound.h0.L} and \eqref{Upper.Bound.detN}
 together formally imply the theorem.
 
 In fact suppose first that $L$ is special, i.e., $h^1(L) \ne 0$. Then Clifford's theorem gives:
 \[ \deg(L) \ \ge \ 2\cdot \big(h^0(L) -1 \big), \] 
 and we arrive directly at \eqref{Main.genus.Ineq}. On the other hand, suppose that $L$ is non-special. Then $h^0(L) = \deg(L) + 1 - g$, and hence
 \[
 (g-1) \, - \, e \ \underset{\eqref{Upper.Bound.detN}}\ge  \ \deg(L) \, + \, 1 \, - \, g \ = \ h^0(L) \ \underset{\eqref{Lower.Bound.h0.L}}\ge \ 1 \, + \, r(p-1). 
 \]
 This leads to the inequality
 \[
 (g\, - \, 1 ) \ \ge \ r(p\, - \, 1) \, + \, e \,+ \, 1,
 \]
 which is stronger than \eqref{Main.genus.Ineq}.   \end{proof}
 
 \begin{remark}[\textbf{Boundary examples are hyperelliptic}]
It is amusing to observe that if $X$ is of general type, and if equality holds in \eqref{Main.genus.Ineq}, then the connecting curves $C_t$ must be hyperelliptic.  In fact, keeping the notation of the proof just completed, equality can only occur when  $L$ is  special and $\deg(L) = 2 \cdot \big( h^0(L) -1 \big)$. So it follows from Clifford's theorem that either $L = \OO_C, L = K_C$ or $C$ is hyperelliptic. But $1 \le \deg(L) \le (2g-2) -e$, so the first two possibilities are ruled out. Example \ref{CY.Sharp.Example} below shows that such hyperelliptic connecting families  actually occur.  We note   one could expect to strengthen some of the inequalities by taking into account invariants such as the Clifford index of the connecting families. \qed
 \end{remark}
 
 \begin{remark} [\textbf{A lower bound for \eqref{Main.genus.Ineq}}] In view of   \eqref{Main.genus.Ineq}, it is of interest to find lower bounds for the degree $\deg_C( f^* K_X)$. Here is one such:
 \begin{quote}
 Assume that $X$ is of general type, fix $\eps > 0$ and let $f : C \lra X$ be a general member of a $p$-connecting family. If $p\gg_\eps 0$, then
 \[
 \deg_C (f^*K_X) \  \ge \ (1-\eps) \, \cdot \,  \sqrt[\raisebox{0.4ex}{\scriptsize $n$}]{ \frac{\vol(X)}{n!} }\cdot p^{\frac{n-1}{n}  }   . 
 \]
 \end{quote}
 In fact, let $m = m(X, p)$ be the smallest integer such that $h^0(mK_X) \ge p$. Then $p$ general points  $x_1, \ldots, x_{p} \in X$ impose independent conditions on $H^0(mK_X)$. Therefore  we can find a section $\sigma\in H^0(mK_X)$ vanishing at $x_1, \ldots, x_{p-1}$ but not at $x_p$. It follows that if  $f : C \lra X$ connects $x_1,  \ldots, x_p$, then 
 \[ \deg_C (f^* K_X) \ \ge \  \frac{p-1}{m(X,p)}.\]  But by definition
$ \vol(X) =\lim_{m \to \infty} \tfrac{ h^0(mK_X)}{m^n / n!}$, so $m(X,p)\approx \left( \frac{n!p}{\vol(X)}\right)^{\frac{1}{n}}$. \qed
\end{remark}

We conclude with some  examples and questions.

\begin{example} [\textbf{Surfaces}] \label{connecting.genus.sfs} Let $X$ be a smooth projective surface with $\kappa(X) \ge 0$. Then \[\pconngenus(X) \ge p\] thanks to \cite{Arapura.Archava} or Theorem \ref{Intro.Thm.B}. Let us show that one can find $p$-connecting families of curves $C$ for which the ratio $\frac{g(C)}{p}$ comes arbitrarily close to $1$. In fact, fix a very ample line bundle $A$, and consider a  positive integer $m$. Then
\[
 r \, =_\text{def} \,  r(mA) \ = \ \frac{ (A^2)}{2} m^2 \, + \, O(m)
\]thanks to Riemann-Roch, and we can find smooth curves $C \in |mA|$ passing through any $ r$ general points (Lemma \ref{Bertini.Lemma} below). On the other hand, by the adjunction formula
\[
g(C) \ = \ \frac{ (A^2)}{2} m^2 \, + \, O(m).
\]
Taking $m \gg 0$ yields the advertised curves.  \qed
  \end{example}

\begin{example} [\textbf{Some   double covers}] \label{CY.Sharp.Example} Here are some examples where Theorems \ref{Intro.Thm.B} and \ref{Main.Genus.Thm,Section.1} are optimal, or nearly so. Starting for simplicity with $n=3$, fix a smooth hypersurface $ B \subseteq \PP^3$ of degree $2b \ge 8$,  and consider the double covering \[ \pi : X \lra \PP^3\] branched along $B$. By a standard dimension count, when $d \ge 3$ we can find a  smooth rational curve $\Gamma = \Gamma_d$ passing through any $  2d$ general points of $\PP^3$, and typically $\Gamma$ will meet $B$ transversely. Starting with $2d$ general points on $X$, we take $\Gamma = \Gamma_d$  to pass  through their images in $\PP^3$. Then the curves 
\[C \ = \ C_d \ = \ \pi^{-1}(\Gamma)\] yield a $2d$-connecting family of genus $bd - 1$, and equality holds in the genus bound from Theorem \ref{Main.Genus.Thm,Section.1}. When $2b = 8$, so that $X$ is Calabi--Yau, we get equality in Theorem \ref{Intro.Thm.B}.	

More generally, let \(B \subseteq \PP^n\) ($n \ge 3$) be a smooth hypersurface of
degree \(2b \ge 2(n+1)\), and let
\[
\pi \colon X \longrightarrow \PP^n
\]
be the double cover branched over \(B\). With \(d \ge n\) put
\[
p_d
=
\left\lfloor
\frac{(n+1)d+n-3}{n-1}
\right\rfloor .
\]
Interpolation for rational curves \cite{AtanasovLarsonYang} yields smooth
rational curves \(\Gamma_d \subseteq \PP^n\) of degree \(d\) passing
through \(p_d\) general points, which may be taken transverse to \(B\).
Their inverse images \(C_d=\pi^{-1}(\Gamma_d)\) form a
\(p_d\)-connecting family of curves of genus
\[
g(C_d)=bd-1,
\]
while
\[
\deg_{C_d}(K_X)=2d\big(b-(n+1)\big).
\]
Writing
\[
(n+1)d+n-3=(n-1)p_d+\rho_d,
\qquad 0\leq \rho_d\leq n-2,
\]
one computes
\[
2g(C_d)-2
=
2(n-1)(p_d-1)+\deg_{C_d}(K_X)+2\rho_d.
\]
Thus the curves \(C_d\) miss equality in Theorem \ref{Main.Genus.Thm,Section.1}
by at most \(2(n-2)\), and equality holds whenever
$
(n-1)\mid 2(d-1).
$
When \(b=n+1\), so that $K_X = \OO_X$ this yields further examples
showing that Theorem \ref{Intro.Thm.B} is optimal or nearly so.  \(\qed\)

\end{example}

\begin{example} [\textbf{Linear growth of $p$-connecting genus}] \label{Linear.Growth.Connecting.Genus}
Let $X$ be an arbitrary smooth projective variety of dimension $n$. Then a construction in the spirit of the previous example shows that $\pconngenus(X)$ is bounded above by a linear function of $p$ (depending on $X$). 

In fact, assuming for simplicity of exposition that $n \ge 3$, start  with a suitably positive projective embedding $X \subseteq \PP^N$, and take a general linear projection to construct a ramified covering
\[ \pi : X \lra \PP^n \ \ ,  \ \ \deg(\pi) \, = \, \delta
\]
branched along a hypersurface $B \subseteq \PP^n$. We can assume that $\pi$ is simply ramified over a general point $y \in B$, i.e., that the monodromy of $\pi$ at $y$ is a single transposition. In this case $2b =_\text{def} \deg B$ is even, and if $\Gamma \subseteq \PP^n$ is a rational curve of degree $d$ meeting $B$ transversely, then $C = \pi^{-1}(\Gamma )$ is a smooth irreducible curve of genus
\[
g \ = \ bd - \delta + 1. 
\]
(The connectedness of $C$ is automatic thanks to the Fulton--Hansen theorem; cf.~\cite[Theorem 4.1]{FL}.) At the risk of estimating wastefully, we can use these curves to connect $p = d$ general points of $X$. In this case we find that 
\[
\pconngenus(X) \ \le \ b \cdot p - (\delta - 1) \ \ \text{for all $ p \ge 2$}.  
\]
Note that all these curves have gonality $\le \delta$ since by construction they admit a degree $\delta$ map to $\Gamma \cong \PP^1$. Thus $\pconngon(X) \le \delta$ for all $p$.\footnote{A linear upper bound on $\pconngenus(X)$ is also witnessed by suitable complete intersection curves, as in the proof of Theorem \ref{Precise.Hypersurface.Thm}. However the present approach is useful for later reference.}  \qed
\end{example}

\begin{problem}
Theorem \ref{Intro.Thm.B} implies that if $\kappa(X) \ge 0$, then
\[
\liminf_{p \to \infty} \ \frac{\pconngenus(X)}{p}\  \ge \ n - 1,
\]
but it seems very unlikely that this is always optimal. It would be interesting to get a sense for what are the possibilities for the ratio on the left. 
\end{problem}

\section{Connecting gonality}

We turn in this section to $p$-connecting gonality. As above, $X$ denotes a smooth complex projective variety of dimension $n \ge 2$.

To set the stage, 
 recall from the Introduction that as $p$ grows, the values of the $p$-connecting gonality of $X$, which are evidently non-decreasing,  stabilize to a fixed eventual value.  In fact, we observed at the end of  Example \ref{Linear.Growth.Connecting.Genus}  that these invariants are bounded above 
by the degree of a suitable branched covering $X \lra \PP^n$.\footnote{With a little more care one sees that $\pconngon(X) \le \operatorname{irr}(X)$, where the degree of irrationality $\operatorname{irr}(X)$ is by definition the least degree of a rational covering $X \dra \PP^n$. } Our goal here is to give a geometric explanation for this limiting $p$-connecting gonality.

  As one might suspect,  the eventual value of $\pconngon(X)$ does not  reflect only the degrees of coverings (or rational coverings) of projective space. This is witnessed by many examples, but  here is an interesting one: 
\begin{example} [\cite{FKP}, \cite{Moretti}] Let $(X, L)$ be a very general polarized $K3$ surface of genus $r(r+1)$  for some $r \in \NN$. Then any $p$ general points of $X$ are connected by a hyperelliptic curve, i.e., 
\[
\pconngon(X) \ = \ 2  \ \  \text{for all \, $p \ge 2$}. \tag{*} 
\]
On the other hand, a general $K3$ of large degree is not a two-fold rational  cover of $\PP^2$ (and one conjectures that the least degree of a rational covering $X \dra \PP^2$ goes to infinity as the polarization grows).

For (*), the idea is that under the stated numerical hypothesis one can construct vector bundles $E, F$ of consecutive ranks $r$ and  $r+1$ on $X$ with the properties that \[ \dim \operatorname{Hom}(E, F) \  = \ 3, \] and that every homomorphism $\sigma : E \lra F$ drops rank at two points. The incidence correspondence relating a homomorphism to its degeneracy locus then gives rise to a diagram
\[
\xymatrix{
Y \ar[d]_\tau \ar[r]^\sigma &X \\ \PP^2,
}
\]
where $\sigma$ is a branched covering of some large degree and $\tau$ is a double cover. As in the examples appearing at the end of the previous section, this implies that $\pconngon(Y)=2,$ and then the same holds for $X$. Hyperelliptic curves on a $K3$ were studied in \cite{FKP}; the perspective here is from \cite{Moretti}.  For an arbitrary $n$-fold $X$, our final picture is  similar: one constructs  dominant morphisms $Y \lra X$ where $Y$ is a generically finite cover of a rationally connected variety. \qed
\end{example}

We next turn to the proof of Theorem
\ref{Intro.Eventual.Gonality.Thm}. The basic idea is very simple. Namely,
the images on $X$ of the degree-$k$ divisors in the given pencils sweep
out a subvariety
\[
Z\ \subseteq\ \operatorname{Sym}^k(X).
\]
The rationally connected variety $R$ arises as a very general fibre
of the MRC fibration of a smooth model of $Z$, and the dimensional
hypothesis will guarantee that the variety $Y$ obtained by pulling back
to $R$ the natural map
\[
X\times\operatorname{Sym}^{k-1}(X)
   \longrightarrow \operatorname{Sym}^k(X)
\]
dominates $X$. The following paragraphs flesh out the details of this
outline.

\begin{proof}[Proof of Theorem \ref{Intro.Eventual.Gonality.Thm}]
We assume given a $p$-connecting family \eqref{Fam.of.Curves} whose general fibre
$C_t$ carries a pencil of degree $k$.

\noi \textsc{Step 1}. We start with some standard reductions. By removing the fixed part of
the pencil and decreasing $k$ if necessary, we may suppose that the
pencil on the general fibre is basepoint-free. The hypothesis $p\geq kn$
remains valid, while the desired conclusion is only strengthened, so we
continue to denote the new degree by $k$.   

After making a generically finite dominant base change on $S$, we may
suppose first that $\pi:\CCC\to S$ has a section. Consider next the
relative parameter space
\[ \nu : G^1_k(\CCC/S)\longrightarrow S
\]  of   pencils of degree $k$ on the fibres of $\pi$. Choose a component
dominating $S$, and take a generically finite multisection of this
component. After making the corresponding base change and then
shrinking $S$, we may suppose that $\nu$ also admits a section. Observe that since all the base changes in question are dominant, the
resulting family remains $p$-connecting.

The existence of a section of $\pi$ guarantees that the relative Picard
scheme carries a normalized Poincar\'e bundle. Consequently, the chosen
section of $\nu$ determines a line bundle $\mathcal A$ on
$\CCC$ of relative degree $k$, together with a rank-two subbundle
\[
U\subseteq\pi_*\mathcal A
\]
whose restriction to each fibre gives the chosen pencil. After shrinking
$S$ once more, the evaluation map
$
\pi^*U\longrightarrow\mathcal A
$
is surjective, and hence determines a degree $k$ covering
\[
h:\CCC\longrightarrow\PP_S(U).
\]
Finally, shrinking $S$ to trivialize $U$, we obtain a diagram
\begin{equation}
\label{Relative.Pencil.Diagram}
\vcenter{\hbox{%
$\xymatrix{
\CCC \ar[rr]^h \ar[dr]_\pi
    && S\times\PP^1 \ar[dl]^{\pro_1}\\
    & S
}$}}
\end{equation}
whose restriction to the fibre over $t\in S$ is the degree $k$ map
$h_t:C_t\lra\PP^1$ defined by the chosen pencil.

\noi \textsc{Step 2}. We henceforth focus on the diagram
\eqref{Relative.Pencil.Diagram}. Denote by $\CCC^{(k)}_S$ and $X^{(k)}$
the $k$-fold symmetric products of $\CCC$ over $S$ and of $X$,
respectively, and write
\[
F^{(k)}:\CCC^{(k)}_S\lra X^{(k)}
\]
for the map induced by $F$. The map $h$ in
\eqref{Relative.Pencil.Diagram} determines a morphism
\[
S\times\PP^1\lra\CCC^{(k)}_S,
\]
and we denote by
\[
\gamma:S\times\PP^1\lra X^{(k)}
\]
its composition with $F^{(k)}$. Thus $\gamma$ associates to
$(t,y)\in S\times\PP^1$ the effective zero-cycle $ f_{t,*}\big(h_t^*(y)\big)  $ of degree $k$
on $X$.

Let
\[
Z\ =\ \overline{\gamma(S\times\PP^1)}\ \subseteq\ X^{(k)}
\]
be the closure of the image of $\gamma$. For general $t\in S$, the curve
$\gamma_t:\PP^1\lra Z$ is non-constant, since $f_t$ is generically
one-to-one. The  $\gamma_t(\PP^1)$ therefore form a covering family of
rational curves on $Z$.

Fix a smooth projective model $\widetilde Z\longrightarrow Z$ and,
after resolving the indeterminacies of its MRC fibration, write
\[
m:\widetilde Z\longrightarrow B
\]
for the resulting morphism. The strict transforms of the curves
$\gamma_t(\PP^1)$ form a covering family on $\widetilde Z$, and hence a very
general member of this family is contracted by $m$ (\cite[Theorem IV.5.2]{Kollar.Rational.Curves}). It follows that the
composite rational map
\[
S\times\PP^1\dashrightarrow\widetilde Z\longrightarrow B
\]
is constant on $\{t\}\times\PP^1$ for very general $t\in S$, and hence
factors through a dominant rational map
\[
\alpha:S\dashrightarrow B.
\]
After shrinking $S$ and $B$, we may suppose that $\alpha$ is a
morphism.  Moreover after shrinking $S$ further, we may  suppose in addition
that for every $t\in S$ the rational map
\[
\{t\}\times \PP^1 \dashrightarrow \widetilde Z
\]
is defined on a dense open subset of $\PP^1$, and that its image is
contained in the fibre $m^{-1}(\alpha(t))$.

Since the curves $\gamma_t(\PP^1)$ are non-constant and contracted by
$m$, one has
\begin{equation}
\label{Dim.B.Eqn}
\dim B\leq \dim Z-1\leq kn-1<p.
\end{equation}

\noi \textsc{Step 3}. The dimensional hypothesis enters through the
following observation.

\begin{quote}
\textsc{Claim.} For a general point $b\in B$, the curves $C_t$ with
\[
t\in S_b =_{\mathrm{def}} \alpha^{-1}(b)
\]
form a covering family on $X$.
\end{quote}
To prove the claim, let
\[
W\ \subseteq\ B\times X
\]
be the closure of the image of
\[
(\alpha\circ\pi,F):\CCC\longrightarrow B\times X.
\]
Thus $W$ is the closure of the locus of pairs $(b,x)$ such that
$x\in f_t(C_t)$ for some $t\in S_b$. If the claim were false, then the general fibre $W_b$ of
the projection
$
W\longrightarrow B
$
would be a proper subvariety of $X$. After shrinking $B$, we could
therefore suppose that
\[
\dim W_b\ \leq\ n-1
\]
for every $b\in B$.

Now consider the $p$-fold fibre product $W^p_B$ of $W$ over $B$.
The commutative square
\[
\xymatrix@C+10pt{
\CCC \ar[d]_\pi \ar[r]
    & W \ar[d] \\
S \ar[r]_\alpha
    & B
}
\]
induces a factorization
\[
F^p:\CCC^p_S\longrightarrow W^p_B\longrightarrow X^p.
\]
On the other hand, by \eqref{Dim.B.Eqn},
\[
\dim W^p_B
   \ \leq\
   \dim B+p(n-1)
   \ <\
   p+p(n-1)
   \ =\
   \dim X^p.
\]
It follows that $F^p$ cannot be dominant, contradicting the assumption
that the original family is $p$-connecting. This proves the claim.

\noi \textsc{Step 4}. We complete the proof by constructing the
varieties $R$ and $Y$. Fix a very general point $b\in B$ for which the
claim in Step 3 holds, and put
\[
R=m^{-1}(b),
\]
where as above $m : \widetilde{Z} \lra B$ represents the MRC fibration. 
We may suppose that $R$ is smooth and projective. The basic properties of the MRC fibration guarantee that  $R$ is rationally connected
(\cite[Theorems IV.3.10 and IV.5.2]{Kollar.Rational.Curves}).  
Since a general fibre of $h_t:C_t\lra\PP^1$ is reduced and $f_t$ is
generically one-to-one, the locus in $Z$ parametrizing reduced zero-cycles
is dense. Taking $b$ very general, we may therefore suppose that a general
point of $R$ parametrizes a reduced zero-cycle on $X$. Denote by
$R_{\rm red}\subseteq R$ the corresponding dense open subset.

By construction, $R$ comes with a morphism
$\rho:R\longrightarrow X^{(k)}.
$  Since the composite map
\[
S\times\PP^1\dashrightarrow\widetilde Z\longrightarrow B
\]
factors through $\alpha$, the zero-cycles arising from the pencils on
the curves $C_t$, for $t\in S_b$, are parametrized by $\rho(R)$. It
therefore follows from Step 3 that the points occurring in the
zero-cycles parametrized by $\rho(R)$ cover $X$.

Consider now the natural  degree $k$ finite morphism
\[
 X\times X^{(k-1)}\longrightarrow X^{(k)},
\qquad
(x,\xi)\longmapsto x+\xi.
\]
Taking the fibre product with $\rho : R \lra X^{(k)},$ we construct a variety $Y_1$ sitting in a diagram
\[
\xymatrix{
 Y_1 \ar[r]^-{\sigma'} \ar[d]_{\tau'}
    & X\\
R.
}
\]
To complete the proof, it remains only to show  that there is an irreducible component $Y$ of $Y_1$ that
dominates both $R$ and $X$. 

To this end, observe that  over the open subset
$R_{\rm red}\subseteq R$ the morphism $\tau'$ is finite etale of degree
$k$. Let 
\[ E_1 \, , \,  \ldots \, , \, E_s\ \subseteq\  Y_1\] be the closures of the irreducible
components of $(\tau')^{-1}(R_{\rm red})$. Then each $E_i$ dominates $R$,
and their degrees over $R$ sum to $k$.
We assert that at least one of the $E_i$ dominates $X$. If not, there is a proper closed subset $H \subsetneq X$ such that  for every $u\in R_{\rm red} $ the cycle $\rho(u)$ is supported on $H$.
Denoting by \[X^{(k)}_H \ \subseteq \ X^{(k)}\] the proper closed subset of cycles supported on $H$,  this means that  $\rho( R_{\rm red}) \subseteq X^{(k)}_H $. Since $R_{\rm red} \subseteq R$ is dense,   this would imply that
  \[ \rho(R) \ \subseteq \ X^{(k)}_H . \] But Step 3
shows that the points occurring in these cycles are dense in $X$, a
contradiction. Thus some component $Y=E_i$ dominates $X$ as well as
$R$, and $\deg(Y \lra R) \le k$, so we are done.
\end{proof}

Finally, we turn to the proof of Corollary
\ref{Rat.Conn.Covering.Ex}. Given the output of Theorem \ref{Intro.Eventual.Gonality.Thm}, the plan is to start with a rational curve connecting many points of $R$ and then consider first its preimage in $Y$, and subsequently its projection to $X$. The one point requiring   care is to ensure that the curves so constructed remain irreducible. To this end we assemble some  results
of Koll\'ar from \cite{KFG}.

\begin{lemma} 
\label{RC.Cover.Pullback.Lemma}
Let $R$ be a smooth projective rationally connected variety, and let
\[
\tau:Y\longrightarrow R
\]
be a dominant generically finite morphism of degree $e$, with $Y$
irreducible. Given $p$ general points
$
r_1,\ldots,r_p\in R,
$
there is a morphism
\[
f:\PP^1\longrightarrow R
\]
whose image contains the $r_i$ and such that
\[
Y\times_R\PP^1
\]
is irreducible.
\end{lemma}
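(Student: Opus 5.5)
The plan is to combine Koll\'ar's theorem on fundamental groups of rationally connected varieties, in the form ``very free curves dominate $\pi_1$,'' with the usual correspondence between irreducibility of pullbacks of finite covers and transitivity of monodromy.

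First, reduce to a finite \'etale situation. Let $U \subseteq R$ be a dense open subset over which $\tau$ is finite \'etale of degree $e$, and let $D = R \setminus U$. Irreducibility of $Y$ means that $\tau^{-1}(U) \to U$ is connected. Equivalently, the monodromy representation $\pi_1(U, u_0) \to S_e$ acts transitively on a fibre. If $f : \PP^1 \to R$ meets $U$, then $Y\times_R \PP^1$ agrees with the pullback of this \'etale cover over the dense open $f^{-1}(U)\subseteq\PP^1$, up to components lying over $f^{-1}(D)$.

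So it suffices to produce $f$ with three properties:
\begin{enumerate}
\item[(a)] $f$ passes through $r_1,\ldots,r_p$;
\item[(b)] $\pi_1(f^{-1}(U)) \to \pi_1(U)$ has image whose monodromy is still transitive, for instance with surjective image or finite index image mapping onto the monodromy group;
\item[(c)] $f^{-1}(D)$ is finite and no component of $Y\times_R\PP^1$ lies over it.
\end{enumerate}
For (c), I would choose $f$ general in a family of very free curves, so that its image meets $D$ in finitely many points. I would then replace $Y\times_R\PP^1$ by the closure of the pullback over $f^{-1}(U)$. Alternatively, I would first replace $Y$ by a model finite over $R$, which is harmless by Stein factorization and normalization, making $\tau$ finite and flat in codimension one. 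For this to work, the statement should be read as concerning the ``main'' part of the fibre product. I expect one must in any case interpret or arrange $\tau$ to be finite, possibly after shrinking, since exceptional components over $D$ could otherwise appear. I will note this.

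The key input is Koll\'ar's result from \cite{KFG}. Let $R$ be smooth and rationally connected, $D\subset R$ a closed subset, and $x_1,\dots,x_p\in R$ general points. Then there is a very free curve $f:\PP^1\to R$ through the $x_i$, meeting $D$ only in finitely many points and transversally at the divisorial part, such that $\pi_1(\PP^1\setminus f^{-1}(D))\to\pi_1(R\setminus D)$ is surjective. Koll\'ar proves such statements by taking a very free curve and then attaching, as in comb smoothing, loops around each component of $D$ together with curves realizing generators of $\pi_1(R\setminus D)$. That group is finitely generated, and even the image in the finite monodromy group suffices. One then smooths the comb while keeping the $p$ points fixed; this is possible because very free curves through general points have unobstructed deformations with points imposed.

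Here is how I would carry it out concretely. Realize each of finitely many loops generating the finite monodromy group $G\subseteq S_e$ by a rational curve through a fixed base point. Take a very free curve $f_0$ through $u_0$ and $r_1,\dots,r_p$. Attach the loop curves as teeth to form a comb, and smooth it, keeping the points $r_i$, to a very free $f$. Monodromy along $f$ then contains the images of all the teeth by a continuity or specialization argument, so it equals $G$ and is transitive. Hence the pullback over $f^{-1}(U)$ is connected, and so $Y\times_R\PP^1$ is irreducible.

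The main obstacle is this last step: ensuring that monodromy survives smoothing of the comb while the incidence with the $p$ general points is maintained. I would cite \cite{KFG} directly for it rather than reprove it.
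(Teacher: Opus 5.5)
\textbf{There is a gap at the step you flag yourself.} Your reduction to transitivity of the monodromy of $\tau^{-1}(U)\to U$ is fine. The problem is the step that carries the lemma. The ``pointed'' form of Koll\'ar's theorem you quote asks for a very free curve through $p$ prescribed general points with $\pi_1(\PP^1\setminus f^{-1}(D))\to\pi_1(R\setminus D)$ surjective. In monodromy language, that is essentially the statement you are trying to prove. What the paper takes from \cite{KFG} is weaker:
\begin{itemize}
\item a family $F\colon W\times\PP^1\to R$ of very free curves;
\item a dense open $W^0\subseteq W$ over which $Y\times_R\PP^1_w$ is irreducible (Corollary~7);
\item no points imposed.
\end{itemize}
Adding the $p$ general points is the actual content of the lemma.

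Your concrete comb construction does not close this gap, for three reasons. First, producing rational curves through a base point whose loops generate the monodromy group is itself the content of Koll\'ar's theorem. Second, teeth all glued to the handle at the same point $u_0$ do not form a nodal comb, and gluing them at other points conjugates the loops by paths on the handle, which can shrink the generated subgroup. Third, you leave open whether the monodromy persists under smoothing with the $r_i$ fixed.

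The missing idea is that no smoothing with fixed points is needed. Irreducibility is an open condition on the parameter space, and passing through general points is a dominance condition. Using the freedom in Koll\'ar's construction (\cite[Remark~4(6)]{KFG}), you may assume $f_w^*T_R\otimes\OO_{\PP^1}(-p)$ is ample for every $w\in W$. Then $H^1(\PP^1,f_w^*T_R(-z_1-\cdots-z_p))=0$ for distinct $z_i$. So the evaluation map $H^0(\PP^1,f_w^*T_R)\to\bigoplus_i T_{R,f_w(z_i)}$, which sits inside the differential of $\ev_p\colon W\times(\PP^1)^p\to R^p$, is surjective, and $\ev_p$ is dominant. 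Hence its restriction to the dense open set $W^0\times(\PP^1)^p$ is still dominant. A general $(r_1,\ldots,r_p)$ is therefore $(f_w(z_1),\ldots,f_w(z_p))$ for some $w\in W^0$, and you are done.

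Separately, your hedge in (c) is unnecessary, and replacing $Y$ by a finite model would change the statement. Since $R$ is smooth, every component of $Y\times_R\PP^1$ has dimension $\ge 1$. A component lying over a single point of $\PP^1$ therefore needs a positive-dimensional fibre of $\tau$. Such fibres lie over a closed subset of codimension $\ge 2$ in $R$, which a general sufficiently free curve through general points avoids. So every component dominates $\PP^1$, and the lemma holds as stated.
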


\begin{proof}[Sketch of Proof]
Consider the family
\[
F:W\times\PP^1\longrightarrow R
\]
produced by Koll\'ar in \cite[proof of Theorem 6, paragraph (28), p. 87]{KFG}. By the freedom in its construction noted in
\cite[Remark~4(6)]{KFG}, we may suppose that, for every $w\in W$,
\[
f_w^*T_R\otimes\OO_{\PP^1}(-p)
\quad\text{is ample},
\]
where
$
f_w=F|_{\{w\}\times\PP^1}.
$
Corollary~7 of \cite{KFG}, applied to $Y\lra R$, gives a dense open
subset
$
W^0\subseteq W
$
such that
\[
Y\times_R\PP^1_w
\]
is irreducible for every $w\in W^0$.

  Consider the $p$-fold evaluation morphism
\[
\ev_p:W\times(\PP^1)^p \longrightarrow R^p.
\]
By the usual deformation-theoretic calculations, its differential in the map directions contains the natural evaluation
homomorphism
\[
H^0(\PP^1,f_w^*T_R)
\longrightarrow
\bigoplus_{i=1}^p T_{R,f_w(z_i)}.
\]
This homomorphism is surjective thanks to the vanishing
\[
H^1\!\left(
\PP^1,f_w^*T_R(-z_1-\cdots-z_p)
\right)=0,
\]
which follows from the positivity imposed above. Therefore $\ev_p$ is
dominant.

The restriction of $\ev_p$ to the dense open subset
\[
W^0\times(\PP^1)^p
\]
remains dominant. Hence, for a general $p$-tuple
$(r_1,\ldots,r_p)\in R^p$, there are $w\in W^0$ and pairwise distinct
$z_1,\ldots,z_p\in\PP^1$ such that
\[
f_w(z_i)=r_i
\qquad (1\leq i\leq p).
\]
The corresponding fibre product
\[
Y\times_R\PP^1_w
\]
is irreducible by the choice of $W^0$, proving the lemma.
\end{proof}

\begin{proof}[Proof of Corollary \ref{Rat.Conn.Covering.Ex}]
Let $c_0$ denote the minimum appearing on the right-hand side of the
statement. Thus there is a diagram
\[
\xymatrix{
Y \ar[r]^\sigma \ar[d]_\tau & X\\
R
}
\]
in which $\sigma$ is dominant, $R$ is rationally connected, and $\tau$
is generically finite of degree $c_0$. We will construct for every $p$ a $p$-connecting family of curves having gonality $\le c_0$.

Fix $p$ general points
\[
q_1,\ldots,q_p\in X.
\]
Choose general lifts $y_i\in Y$ with $\sigma(y_i)=q_i$, and put
\[
r_i=\tau(y_i)\in R.
\]
Since the points $q_i$ are general and both maps in the diagram are
dominant, we may suppose that the $y_i$, and
hence the $r_i$, are general.

Applying Lemma \ref{RC.Cover.Pullback.Lemma}, we find a morphism
$
f:\PP^1\longrightarrow R
$
passing through the $r_i$ such that
\[
\Gamma \ = \ Y\times_R\PP^1
\]
is irreducible. If $f(z_i)=r_i$, then
$
(y_i,z_i)\in\Gamma,
$
so the image of $\Gamma$ in $X$ contains all the points $q_i$.
The normalization of a projective completion of $\Gamma$ admits a
degree $c_0$ map to $\PP^1$. Since gonality doesn't increase under coverings of curves,\footnote{If $\Gamma \lra \Delta$ is a branched covering of curves, then a $g^1_k$ on $\Gamma$ pushes down to a (possibly non-linear) one-dimensional family of linearly equivalent effective divisors of degree $k$ on $\Delta$, which must therefore have gonality $\le k$.} its image in $X$ also has gonality at
most $c_0$. Thus
$\pconngon(X)\leq c_0$
for every $p\geq2$.

Conversely, let $k$ be the eventual value of $\pconngon(X)$, and choose
$p$ in the stable range sufficiently large that
$
p\geq kn.
$
Theorem \ref{Intro.Eventual.Gonality.Thm} then produces a diagram in
which the covering of the rationally connected variety has degree at
most $k$. This gives the reverse inequality
$
c_0\leq k,
$
and we are done.
\end{proof}

\begin{problem} It would be interesting to estimate or compute the final value of the $p$-connecting gonality for natural choices of $X$. When $X = \mathcal{M}_g$ is the moduli space of curves of genus $g$, suitable Hurwitz spaces fit naturally into the setting of Corollary \ref{Rat.Conn.Covering.Ex}, but presumably the resulting covering degree is far from optimal. 
\end{problem}

\begin{problem}
One might wonder whether there an analogue of the theorem of Graber--Harris--Starr \cite{GHS} in the setting of Corollary \ref{Rat.Conn.Covering.Ex}.
\end{problem}

\section{Hypersurfaces} \label{Hypersurfaces}

As an illustration of the general theory, we compute in this section the asymptotics of these invariants for  hypersurfaces.

Consider then a smooth hypersurface 
\[
X \ = \ X_d \ \subseteq \ \PP^{n+1}
\] 
of degree $d$ and dimension $n$. To begin with, we observe the elementary

\begin{proposition}
For any $p\ge 2$, the $p$-connecting gonality of $X$ satisfies
\[
d \,-\, n \ \le \ \pconngon(X) \ \le \ d-1.
\]
In particular, 
\[
\pconngon(X)\,\sim\,d \ \ \text{as } d\to\infty.
\]
\end{proposition}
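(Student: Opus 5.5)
Upper bound: projection from a general point $o \in X$ gives a rational map $X \dashrightarrow \PP^n$ of degree $d-1$. I would run the construction of Example \ref{Linear.Growth.Connecting.Genus} with this map. Given $p$ general points $x_1,\dots,x_p\in X$, consider the cone over their images: a rational curve $\Gamma\subseteq\PP^n$ through the projected points pulls back to the surface $\Sigma$ swept by lines through $o$ meeting the lifted curve. Then $\Sigma\cap X$ consists of the residual curve plus contributions at $o$, and it carries a map of degree $d-1$ to $\Gamma\cong\PP^1$.

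To keep the irreducibility issue under control, I would instead use Corollary \ref{Rat.Conn.Covering.Ex}, or more directly its proof via Lemma \ref{RC.Cover.Pullback.Lemma}. Let $Y=\mathrm{Bl}_oX$, with the morphism $\tau:Y\to\PP^n$ generically finite of degree $d-1$, and $\sigma:Y\to X$ birational. The proof of Corollary \ref{Rat.Conn.Covering.Ex} shows that for every $p\ge2$ there is a $p$-connecting family of gonality $\le d-1$. It gives irreducible curves $Y\times_{\PP^n}\PP^1$ through lifts of the $p$ points, whose images in $X$ have gonality $\le d-1$. That settles $\pconngon(X)\le d-1$ for all $p$.

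Lower bound: it suffices to treat $p=2$, since $\pconngon$ is non-decreasing in $p$. I would invoke the known result of Bastianelli--De Poi--Ein--Lazarsfeld--Ullery \cite{BDELU}: the connecting gonality of a very general, and in fact any smooth, hypersurface of degree $d\ge n+2$ is at least $d-n$. The mechanism is as follows. A covering family of curves of gonality $c$ through two general points yields, via the pencils, zero-cycles of degree $c$ on $X$ moving in a rational curve in $X^{(c)}$. For such a cycle, $K_X=\OO_X(d-n-2)$ restricted to the fibres forces the points of a general fibre to satisfy the Cayley--Bacharach condition with respect to $|K_X|$. Then a lemma on points in Cayley--Bacharach position for $\OO_{\PP^{n+1}}(m)$ shows that a set of at most $2m+1$ such points lies on a line. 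Given $m=d-n-2$, a fibre of at most $2m+1$ points would therefore be collinear. Lines meet $X$ in $d$ points and there are not enough lines through a general point, which forces $c\ge m+2=d-n$. For small $d$, namely $d\le n+1$, the bound $d-n\le1$ is trivial.

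The main obstacle is the lower bound. I expect the key step to be the Cayley--Bacharach argument, which I would cite from \cite{BDELU} rather than reprove; the point to check is that it applies to the $p$-connecting setting, which is immediate because such a family is in particular $2$-connecting. The asymptotic $\pconngon(X)\sim d$ then follows, since $(d-n)/d\to1$ and $(d-1)/d\to1$ as $d\to\infty$.
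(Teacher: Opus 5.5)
Your proposal is correct and follows essentially the same route as the paper. For the upper bound, you resolve projection from a point by the degree $d-1$ morphism $\mathrm{Bl}_oX\to\PP^n$ and feed it into the construction in the proof of Corollary \ref{Rat.Conn.Covering.Ex}; for the lower bound, you cite the covering-gonality bound of \cite{BDELU}.

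One small correction to your heuristic sketch of that citation. The bound $c\ge m+2=d-n$ comes directly from the fact that points in Cayley--Bacharach position with respect to $\OO_{\PP^{n+1}}(m)$ number at least $m+2$. It does not come from the collinearity statement, and the ``not enough lines'' step does not make sense: every line through a point of $X$ meets $X$ again.
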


\begin{proof}
The lower bound holds for any covering family: see for instance \cite{BDELU}. On the other hand, projection from a point of $X$ gives a rational covering $X \dra \PP^n$ of degree $d-1$, and so $\pconngon(X)\le d-1$ thanks to the results of the previous section.
\end{proof}

\begin{remark}
Assume that $X$ is very general of degree $d\ge 2n+2$. In view of \cite{BDELU} and \cite{Yang}, it seems likely that $\pconngon(X)=d-1$ for $p\gg 0$, but we have not tried to verify this. \qed
\end{remark}

The $p$-connecting genus is more interesting because one would like to understand its joint asymptotic behavior in $p$ and $d$. To this end, fix $n\geq 2$ and define
\[
r(p)=r_n(p)
=_{\mathrm{def}}
\min\left\{
r\geq 1
\ \middle|\
\binom{r+n+1}{n+1}\geq p+n-1
\right\}.
\]
Thus
\[
r(p)\sim \bigl((n+1)!\,p\bigr)^{\frac1{n+1}}
\qquad\text{as }p\to\infty;
\]
in particular,
\[
r(p)\asymp_n p^{\frac1{n+1}}.
\]

The result is then the following.

\begin{theorem}\label{Precise.Hypersurface.Thm}
Fix an integer $n\geq 2$. There exist positive constants $b_n$ and $B_n$, depending only on $n$, such that for every smooth hypersurface
\[
X_d\ \subseteq\ \PP^{n+1}
\]
and all integers $p\geq 3$ and $d\geq n+3$ satisfying $r(p)<d$, one has
\begin{equation}\label{Joint.Asympt.Eqn}
b_n\cdot p^{\frac{n-1}{n+1}}d^2
\ \leq\
\pconngenus(X_d)
\ \leq\
B_n\cdot p^{\frac{n-1}{n+1}}d^2.
\end{equation}
In other words,
\[
\pconngenus(X_d)
\ \in\
\Theta_n\left(
p^{\frac{n-1}{n+1}}\cdot d^2
\right)
\]
when $p$ and $d$ are in the indicated range.
\end{theorem}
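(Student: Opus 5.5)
The plan is to prove the two inequalities in \eqref{Joint.Asympt.Eqn} separately. The upper bound will come from an explicit complete intersection construction. The lower bound will come from Theorem \ref{Main.Genus.Thm,Section.1}, combined with a lower bound on the degree of a connecting curve, which I expect to get from a Castelnuovo-type estimate. Throughout, write $r = r(p)$. The hypothesis $r < d$ guarantees that forms of degree $\le r$ on $\PP^{n+1}$ restrict injectively to $X = X_d$.

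\emph{Upper bound.} By the definition of $r(p)$, the degree $r$ forms on $\PP^{n+1}$ vanishing at $p$ general points $x_1, \ldots, x_p \in X$ form a space of dimension
\[
\binom{r+n+1}{n+1} - p \ \ge \ n-1.
\]
I would take $n-1$ general members $H_1, \ldots, H_{n-1}$ of this linear system and set $C = X \cap H_1 \cap \ldots \cap H_{n-1}$. One has to check that for general points $C$ is a reduced irreducible curve, smooth away from finitely many points. I would argue this by a Bertini argument as in Lemma \ref{Bertini.Lemma}: for $r \ge 2$ the base locus of the system is the $p$ points themselves, and one restricts to $X$ successively. Adjunction then gives
\[
2g(C) - 2 \ \le \ d\, r^{n-1}\big(d + (n-1)r - n - 2\big) \ \le \ n\, d^2 r^{n-1}.
\]
Here the second inequality uses $r < d$. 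Since $r \asymp_n p^{1/(n+1)}$, this yields the upper bound with a suitable constant $B_n$, after dealing separately with the small values of $p$ for which $r$ is bounded.

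\emph{Lower bound.} Let $f : C \to X$ be a general member of a $p$-connecting family, and let $\delta$ be its degree in $\PP^{n+1}$. Since $K_X = \OO_X(d-n-2)$ and $d \ge n+3$, Theorem \ref{Main.Genus.Thm,Section.1} gives
\[
2g - 2 \ \ge \ (d-n-2)\,\delta \ \ge \ \tfrac{d}{n+3}\,\delta.
\]
It therefore suffices to prove $\delta \ge c_n\, d\, r^{n-1}$, i.e.\ that connecting curves have degree at least comparable to that of the complete intersections above.

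The first input is interpolation on $X$. Because $r-1 < d$ and $p$ is at most the number of degree $r-1$ forms (up to $O(n)$), $p$ general points of $X$ impose essentially independent conditions on forms of degree $s \le r-1$. These points lie on $C$, so the image of $H^0(\PP^{n+1}, \OO(s)) \to H^0(C, \OO_C(s))$ has dimension $\gtrsim \min\{p, \binom{s+n+1}{n+1}\}$. The Hilbert function of $C$ is therefore forced to be as large as possible up to degree $\approx r$.

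The second input is a Castelnuovo-type bound for an irreducible curve of degree $\delta$, of the form $h_C(s) \le \min\{s\delta + 1, \ldots\}$, refined using the fact that $C$ spans a space of dimension comparable to $\binom{r+n+1}{n+1}$. Comparing the two bounds at $s \approx r$ should give the required lower bound on $\delta$.

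The factor $d$ has to be extracted from the condition $C \subseteq X$. The plan there is to intersect with hypersurfaces: take a general hypersurface $G$ of degree $s \le r$ through $p'$ of the points but not containing $C$. The $p'$ points lie on the complete intersection surface-type locus $X \cap H_1 \cap \ldots$, and Bezout in $X$ counts $\deg(C \cdot G)$ against $d$ times the relevant products of degrees.

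I expect this degree lower bound, and specifically obtaining the extra factor of $d$ rather than only $\delta \gtrsim p^{n/(n+1)}$, to be the main obstacle. The naive count "a degree $s$ form through $p-1$ points not through the $p$-th, hence $s\delta \ge p-1$" loses exactly this factor. My first attempt would be to apply the Castelnuovo bound in the form of the Hilbert function of a general hyperplane section of $C$, viewed inside the $(n-1)$-dimensional hypersurface $X \cap \PP^n$ of degree $d$, and then induct on $n$.
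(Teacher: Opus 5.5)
Your upper bound is correct and is the paper's construction. You cut $X$ with $n-1$ hypersurfaces $H_i$ of degree $r$ through the points, get smoothness from the incidence argument of Lemma~\ref{Bertini.Lemma} applied to $\nu_r(X)$, and use adjunction to get $2g-2\le n\,r^{n-1}d^2$. Your aside that the base locus is just the $p$ points is not right in general: when the forms through the points span a space of dimension exactly $n-1$, the base locus contains a surface. The incidence argument does not need this claim.

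The lower bound, however, has a genuine gap. The reduction to $\delta\ge c_n\,d\,r^{n-1}$ is fine, but this degree bound is never proved, and the inputs you propose cannot give it. Interpolation in degrees $s\le r<d$ and Hilbert-function bounds such as $h_C(s)\le s\delta+1$ do not distinguish a curve on $X$ from a curve through $p$ general points of $\PP^{n+1}$. There, complete intersections of $n$ hypersurfaces of degree about $r$ already achieve $\delta\asymp_n r^n$. Indeed, comparing your two bounds at $s=r$ yields only $r\delta+1\ge p$. Since $r^n$ falls short of $d\,r^{n-1}$ by the unbounded factor $d/r$, no argument built only on these inputs can work. The Bezout sketch has the same defect, since its natural output is again $s\delta\ge p'$. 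So does the hyperplane-section induction, since a general hyperplane section of $C$ does not even contain the $q_i$.

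The missing idea is that the factor $d$ is already in hand, in $2g-2\ge (d-n-2)\delta$. What you need is an upper bound on $g$ that is quadratic in $\delta$ and has $p$ in the denominator. Castelnuovo gives exactly this when applied to the \emph{genus} of the Veronese image $\Gamma=\nu_r(C)$, rather than to Hilbert functions.

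Since $r<d$ and $\binom{r+n+1}{n+1}\ge p$, the points $\nu_r(q_1),\ldots,\nu_r(q_p)$ are linearly independent. So $\Gamma$, of degree $r\delta$, spans a space of dimension at least $p-1$. After a birational projection to $\PP^{p-1}$, Castelnuovo's bound gives $g\le r^2\delta^2/(2(p-2))$. Together with the genus inequality this yields $(d-n-2)\delta\le r^2\delta^2/(p-2)$, i.e.\ $\delta\ge (d-n-2)(p-2)/r^2\gg_n d\,r^{n-1}$. This is precisely your missing degree bound. Substituting back gives $g\ge 1+(d-n-2)^2(p-2)/(2r^2)\gg_n p^{\frac{n-1}{n+1}}d^2$.
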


\begin{remark}
Concerning the regime in which these asymptotics hold, observe that
\[
\frac{p^{\frac{n-1}{n+1}}d^2}{p}
\ =\
\left(\frac{d^{n+1}}{p}\right)^{\frac{2}{n+1}}.
\]
Hence if $p/d^{n+1}\to\infty$, then
\[
p^{\frac{n-1}{n+1}}d^2=o(p).
\]
On the other hand, Theorem~\ref{Intro.Thm.B} gives a linear lower bound in $p$, and so the upper bound in \eqref{Joint.Asympt.Eqn} cannot hold when $p\gg d^{n+1}$. \qed
\end{remark}

The plan is to construct the required curves as suitable complete intersections in $X$. We start with an elementary remark.

\begin{lemma}\label{Bertini.Lemma}
Let $X\subseteq\PP^N$ be a smooth non-degenerate subvariety of dimension $n$, and fix general points $q_1,\ldots,q_p\in X$. If
\[
p\leq N-n+2,
\]
then the $q_i$ lie on a smooth codimension $n-1$ linear section of $X$.
\end{lemma}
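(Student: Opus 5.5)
We reduce the statement to a dimension count for linear subspaces through the points, followed by Bertini's theorem. A codimension $n-1$ linear section of $X$ is $X\cap \Lambda$ with $\Lambda\subseteq\PP^N$ a linear subspace of codimension $n-1$, that is, of dimension $N-n+1$. Since $X$ is non-degenerate, $p$ general points $q_1,\ldots,q_p\in X$ with $p\le N+1$ are linearly independent in $\PP^N$. Here $p\le N-n+2 = \dim\Lambda+1$, so the $q_i$ span a linear subspace $\langle q_1,\ldots,q_p\rangle$ of dimension $p-1\le N-n+1$. The subspaces $\Lambda$ of dimension $N-n+1$ containing all the $q_i$ are then parametrized by a nonempty Grassmannian of subspaces containing this span, and there are exactly the codimension $n-1$ linear sections through the points. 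It remains to show that a general such $\Lambda$ cuts $X$ in a smooth curve.

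Equivalently, $\Lambda$ is cut out by $n-1$ general hyperplanes chosen from the linear system $V\subseteq H^0(\PP^N,\OO(1))$ of hyperplanes through $q_1,\ldots,q_p$. By Bertini's theorem, restricted to $X$, the linear system $V|_X$ has general member smooth away from its base locus. Its base locus on $X$ is $X\cap\langle q_1,\ldots,q_p\rangle$.

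\textbf{Key step.} I would first show that for general $q_i$ this base locus is just the finite set $\{q_1,\ldots,q_p\}$. Since $p-1\le N-n+1$, the span has dimension at most $\codim X+1$. The general-position (uniform position / trisecant-type) statement for general points gives that the span of general points meets $X$ in only those points, at least as long as the span has complementary-or-smaller dimension plus one. I would check this by induction on $p$: adding a general point of $X$ to a span $M$ with $X\cap M$ finite keeps the new intersection finite. The reason is that the cone over $X\cap M$ from a general point $q$ does not contain a curve of $X$ through $q$ unless $\dim M$ is large. I expect this to be the main obstacle. If finiteness is hard, it suffices that $X\cap\langle q_i\rangle$ has dimension $\le 0$ near generic points, so that the successive sections can be made to avoid positive-dimensional base loci.

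With the base locus finite, I iterate: take general $H_1,\ldots,H_{n-1}\in V$. By Bertini, each successive intersection $X\cap H_1\cap\cdots\cap H_j$ is smooth outside the $q_i$, and it has the expected dimension. Smoothness at each $q_i$ holds because the tangent space condition is open: the differentials of the $H_j$ at $q_i$ impose independent conditions on $T_{q_i}X$ for a general choice. This uses that the hyperplanes through the $q_i$ separate tangent directions at $q_i$, which holds because $V$ is a large system. Each successive section is also irreducible and connected, by Bertini again, since it has dimension $\ge 1$. This yields the desired smooth curve $X\cap\Lambda$ through $q_1,\ldots,q_p$.
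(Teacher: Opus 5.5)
\textbf{There is a genuine gap, in the extremal case $p = N-n+2$.} This is exactly the case the lemma needs: smaller $p$ reduce to it by adding further general points.

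In this case the span $M=\langle q_1,\ldots,q_p\rangle$ has dimension $p-1=N-n+1$, so it is already a codimension $n-1$ linear space. The section is therefore forced to be $\Lambda=M$. The system $V$ of hyperplanes through the $q_i$ consists only of hyperplanes containing $\Lambda$, and $n-1$ general members of $V$ cut out $\Lambda$ itself. Choosing ``general'' $H_1,\ldots,H_{n-1}\in V$ gives no freedom, so Bertini says nothing about $X\cap\Lambda$.

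Your key step is also false here. Any linear space of dimension $N-n+1=\codim X+1$ meets $X$ in a set of dimension at least $1$, so the base locus $X\cap M$ is a curve, never a finite set. Your tangent-space argument can at best give smoothness of $X\cap\Lambda$ at the $q_i$. Nothing in your proposal controls the rest of that curve.

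Even for $\dim M\le \codim X$, the claim that $X\cap M=\{q_1,\ldots,q_p\}$ fails in general. The line through two general points of a hypersurface of degree $d\ge 3$ meets it in $d$ points. So you would also have to prove smoothness at these extra base points, and your argument does not address them. The finiteness you flagged as the main obstacle is left unproved in any case.

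\textbf{The missing idea.} You need to show that the span of $m=N-n+2$ general points of $X$ is a \emph{general} point of the Grassmannian $\GGGG$ of codimension $n-1$ subspaces. Then Bertini can be applied to $\Lambda$ itself, rather than to hyperplanes through the points. The paper does this by an incidence count:
\begin{itemize}
\item Consider pairs $(\Lambda,(x_1,\ldots,x_m))$ with $x_i\in C_\Lambda=X\cap\Lambda$. This variety has dimension $m(n-1)+m=mn=\dim X^m$.
\item Its map to $X^m$ is generically finite, because $m$ general points of the non-degenerate curve $C_\Lambda\subseteq\Lambda\cong\PP^{m-1}$ span $\Lambda$.
\item Hence the map is dominant, and the $m$-tuples lying on no smooth $C_\Lambda$ form a proper subset of $X^m$.
\end{itemize}
Passing from $p<m$ to $m$ by enlarging the set of points then handles all $p$. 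It also removes any need for your finiteness step, since $X\cap\langle q_1,\ldots,q_p\rangle$ is then a proper linear section of an irreducible non-degenerate curve.
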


\begin{proof}
By enlarging the set of points, it is enough to treat the case $p=m=_{\mathrm{def}}N-n+2$. Let $\GGGG$ be the Grassmannian of codimension $n-1$ linear spaces in $\PP^N$, so that $\dim\GGGG=(N-n+2)(n-1)=m(n-1)$. For general $\Lambda\in\GGGG$, Bertini's theorem asserts that
\[
C_\Lambda=X\cap\Lambda
\]
is a smooth curve. Consider the incidence variety parametrizing a linear space $\Lambda\in\GGGG$ together with $m$ ordered points on $C_\Lambda$. Its dimension is
\[
\dim\GGGG+m=m(n-1)+m=mn=\dim X^m.
\]
Moreover, $m$ general points on the non-degenerate curve $C_\Lambda\subseteq\Lambda\cong\PP^{m-1}$ span $\Lambda$. Hence the natural map from this incidence variety to $X^m$ is generically finite and dominant. Therefore the set of $m$-tuples for which the assertion fails is contained in a proper subset of $X^m$.
\end{proof}

\begin{proof}[Proof of Theorem \ref{Precise.Hypersurface.Thm}]
We start by constructing $p$-connecting curves whose genus approaches the upper bound in the statement.
Set
$N_m \ = \ \hh{0}{\PP^{n+1}}{\OO_{\PP^{n+1}}(m)} - 1$. Then $r = r(p)$ is the least integer having the property that $p \le N_r - n + 2$. Now choose general points 
\[q_1, \ldots, q_p \in X,
\]
and consider the $r$-fold Veronese embedding 
\[ \nu_r \ : \ X \ \hookrightarrow \ \PP^{N_r}.
\]
By the Lemma, a general linear space of codimension $n-1$ in $\PP^{N_r} $ through the $q_i$ will cut out a smooth curve  on the Veronese image of $X$ passing through the $q_i$. Going back to the original embedding $X \subseteq \PP^{n+1}$ the curve $C$ just constructed is the complete intersection in $\PP^{n+1}$ of $n-1$ hypersurfaces of degree $r$ and the degree $d$ hypersurface $X$. Thus $C$ has degree $r^{n-1}d$, and by adjunction its genus $g$ satisfies
\begin{align*}
2g - 2 \ &= \ \big( (n-1)r + d - (n+2)\big) \cdot r^{n-1}d \\
&\le \  n\cdot r^{n-1}d^2,
\end{align*}
where we have used the assumption $r<d$. Via the approximation $r(p)\asymp_n p^{\frac1{n+1}}$, we see that this  bound has the asserted shape $p^{\frac{n-1}{n+1}}d^2$.

We can give a more precise bookkeeping of the dependencies among the quantities in play.  By the minimality of $r$,
$
\binom{r+n}{n+1}<p+n-1.
$
On the other hand,
$
\binom{r+n}{n+1}\geq r^{n+1}/(n+1)!,
$
while $p+n-1\leq np$ since $n\geq2$ and $p\geq3$. Therefore
\[
r\ \leq\ A_n\cdot p^{\frac1{n+1}},
\quad \text{ where } \quad
A_n=_{\mathrm{def}}\bigl(n(n+1)!\bigr)^{\frac1{n+1}}.
\]
It follows that
\[
g
\ \leq\
1+\frac{n}{2}A_n^{n-1}
p^{\frac{n-1}{n+1}}d^2.
\]
Since $p\geq3$ and $d\geq n+3$, the additive constant can be absorbed into the main term. This proves the upper bound in \eqref{Joint.Asympt.Eqn}, with a constant $B_n$ depending only on $n$.

We now turn to the lower bound. Let $C$ be a general member of any $p$-connecting family on $X$, and write
$
g=g(C)
$
and
$
e=\deg_{\PP^{n+1}}C. 
$
The elementary inequality for covering families established in \cite{BDELU} gives
\[
2g-2\ \geq\ K_X\cdot C
       \ =\ (d-n-2)e. \tag{*}
\]
We will combine this with an upper bound on $g$ in terms of $e$.

Consider the image of $C$ under the $r$-fold Veronese embedding: call it
\[ \Gamma \ =_\text{def} \ \nu_r(C) \ \subseteq \ \PP^{N_r}.
\]
Since $r<d$, the degree $r$ hypersurfaces on $\PP^{n+1}$ restrict to the complete linear series $\vert\OO_X(r)\vert$. Thanks to our choice of $r$, the images under $\nu_r$ of $p$ general points of $X$ are linearly independent. Since $C$ passes through these points, it follows that $\Gamma$ spans a linear space of dimension $s \ge p-1$.
Moreover, $\deg\Gamma=re$. After a general projection from its linear span to $\PP^{p-1}$, the curve $\Gamma$ maps birationally onto a non-degenerate curve of degree $re$. Castelnuovo's bound on the genus of a non-degenerate curve in $\PP^{p-1}$ (\cite[p.~527]{GH}) therefore gives
\[
g
\ \leq\
\frac{(re-1)^2}{2(p-2)}
\ \leq\
\frac{r^2e^2}{2(p-2)}. \tag{**}
\]
Combine (*) and (**) to get
\[
(d-n-2)e
\ \leq\
2g-2
\ <\
2g
\ \leq\
\frac{r^2e^2}{p-2}.
\]
Consequently
\[
e
\ \geq\
\frac{(d-n-2)(p-2)}{r^2}.
\]
Substituting back into (*) yields
\[
g
\ \geq\
1+\frac{(d-n-2)^2(p-2)}{2r^2}.
\]
Using again that $r(p)\asymp_n p^{\frac1{n+1}}$, we see that this is bounded below by a constant multiple of $\frac{d^2 p}{r^2} \asymp  p^{\frac{n-1}{n+1}}d^2 $.

It remains only to carry out the precise accounting underlying this lower bound. The assumptions $d\geq n+3$ and $p\geq3$ imply
$
d-n-2\geq d/(n+3)
$
and
$
p-2\geq p/3.
$
Using also $r\leq A_np^{1/(n+1)}$, we obtain
\[
g
\ \geq\
\frac{1}{6(n+3)^2A_n^2}\,
p^{\frac{n-1}{n+1}}d^2.
\]
Thus the lower bound in \eqref{Joint.Asympt.Eqn} holds, for example with
$
b_n=1/\bigl(6(n+3)^2A_n^2\bigr).
$
This completes the proof.
\end{proof}

\section{Coda: Higher-dimensional connecting families}
\label{Higher.Dim.Coverings}

This section is devoted to bounds on the volume of a family of
$k$-dimensional subvarieties of $X$ passing through $p$ general points.
As above, $X$ is a smooth projective variety of dimension $n$, and we
assume that $1 \leq k \leq n-1$.

Consider a family
\begin{equation} \label{Fam.of.Subvars}
\vcenter{\hbox{$
\xymatrix{
\VVV \ar[r]^{F} \ar[d]_\pi & X \\
S
}
$}}
\end{equation}
of smooth $k$-dimensional projective varieties $\{V_t\}_{t\in S}$ mapping to $X$. We
assume that each $V_t$ maps birationally onto its image, and that the
induced map
\begin{equation}
F^p:\VVV^p_{/S}\lra X^p
\end{equation}
is dominant. Recall that the \textit{canonical volume} of $V=V_t$ is
defined to be
\[
\vol(V)
   =\lim_{m\to\infty}
      \frac{\hh{0}{V}{mK_V}}{m^k/k!}.
\]
It follows from the deformation invariance of plurigenera
\cite{S} that $\vol(V_t)$ is constant in smooth projective families.

The main point is that these volumes satisfy a linear lower bound in $p$.

\begin{theorem} \label{k-dim.linear.lower.bound}
Assume that $X$ is of general type. Then, in the situation above, there
exists a positive real number $a_{k,X}>0$ such that
\[
\vol(V)\geq a_{k,X}\cdot(p-k).
\]
\end{theorem}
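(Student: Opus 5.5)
The plan is to imitate the curve case (Lemma \ref{GGG.Lemma} and Theorem \ref{Main.Genus.Thm,Section.1}), using the normal sheaf of a general member $f : V \lra X$ and then converting positivity of $\det N'$ together with $f^*K_X$ into a lower bound on $\vol(V)$.

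\textbf{Step 1: generic generation.} Let $N'$ be the torsion-free quotient of the normal sheaf of $f$, which has rank $n-k$. Exactly as in Lemma \ref{GGG.Lemma}, the $p$-connecting hypothesis implies that for general points $q_1,\dots,q_{p-1}\in V$ the sheaf $N'\otimes I_{q_1}\otimes\cdots\otimes I_{q_{p-1}}$ is generically globally generated. Taking determinants, and noting that a general section vanishes at each $q_i$, we get that
\[
L=\det N' \quad\text{satisfies}\quad h^0\big(V, L\otimes I_{q_1}^{\,n-k}\cdots I_{q_{p-1}}^{\,n-k}\big)\neq 0 .
\]
Since the $q_i$ are general, this forces $h^0(V,L)\ge p$. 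In fact one gets much more: $L$ separates $p$ general points, in the sense that sections vanishing at any $p-1$ of them need not vanish at the last.

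\textbf{Step 2: adjunction.} Adjunction gives, up to an effective divisor coming from torsion, $K_V \ge f^*K_X + L$ on a resolution. Since $X$ is of general type, fix $m_0$ with $m_0K_X = A + E$, where $A$ is very ample and $E$ is effective. We may assume $V$ is not contained in $E$, because $V$ moves in a covering family. Then
\[
m_0K_V \;\ge\; f^*A + m_0L ,
\]
so $\vol(V)\ge m_0^{-k}\vol(f^*A+m_0L)$. It therefore suffices to bound $\vol(f^*A + L)$ below linearly in $p$, with $f^*A$ big and nef because $f$ is birational onto its image.

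\textbf{Step 3: the key estimate (main obstacle).} We need
\[
\vol(f^*A+L)\ \ge\ c\,(p-k),
\]
where $c$ depends only on $X$ and $k$. The intended route goes through a restriction to curves. Take $C$ to be a complete intersection curve in $V$ of $k-1$ general members of $|f^*A|$ passing through $k-1$ of the chosen points. Then $L|_C$ still has sections vanishing at $p-k$ general points of $C$ while not vanishing identically, so $\deg L|_C\ge p-k$. Write $D=f^*A+L$. By log-concavity/Khovanskii–Teissier-type inequalities for the big divisor $D$ and nef $f^*A$, we have $\vol(D)\ge (D\cdot (f^*A)^{k-1})$ as an inequality of intersection numbers on a model where $D$'s positive part is used. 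More carefully, we use $\vol(D)\ge \vol(f^*A)+ k\,(f^*A)^{k-1}\cdot L_{\text{mov}}$, which holds since $L$ is effective and moves, via the differentiability of volume or by directly counting sections of $m(f^*A+L)$ restricted along $L$. This gives $\vol(D)\ge k\deg(L|_C)\ge k(p-k)$, because $(f^*A)^{k-1}\cdot L=\deg L|_C$.

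The delicate point is justifying the lower bound $\vol(f^*A+L)\ge k\,(f^*A)^{k-1}\cdot M$, with $M$ the mobile part of $L$, and making sure the points used to make $\deg M|_C\ge p-k$ interact correctly with the general choice of $C$. I would first try the standard estimate $\vol(A'+M)\ge \sum_j \binom{k}{j}(A'^{k-j}\cdot M^j)$, valid for nef $A'$ and nef $M$ after passing to a model resolving the base locus of $|M|$. Here $M$ is the moving part of a linear system whose general member vanishes to order $\ge 1$ at $p-1$ general points. Together with $\vol(V)\ge m_0^{-k}\cdot k(p-k)$, this yields the theorem with $a_{k,X}=k/m_0^k$.
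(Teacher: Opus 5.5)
Your Steps 1 and 2 are sound and match the first half of the paper's argument: they give $h^0(V,L)\ge p$ and $m_0K_V\ge f^*A+m_0L$. The gap is the key claim of Step 3, that $\deg(L|_C)\ge p-k$ for a complete intersection curve $C$ of $k-1$ general members of $|f^*A|$. Nothing you have established supports this. Step 1 controls sections vanishing at $p-1$ points that are general in $V$, not at $p-k$ points forced to lie on one curve. Moreover, the restriction $H^0(V,L)\to H^0(C,L|_C)$ can have a very large kernel. In general $h^0(L)$ is not bounded linearly by $\deg(L|_C)$: if $L$ is pulled back from $\OO_{\PP^k}(a)$ by a generically finite map, then $h^0(L)\ge\binom{a+k}{k}$ grows like $a^k$, while $(f^*A)^{k-1}\cdot L$ grows only like $a$. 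For the same reason, your intermediate bound $\vol(f^*A+L)\ge k(p-k)$ cannot follow from $h^0(L)\ge p$. Take $\PP^2$ with $f^*A=\OO(1)$, $L=\OO(a)$ and $p=h^0(L)$; then $\vol(f^*A+L)=(a+1)^2<2(p-2)$ as soon as $a\ge 4$. Keeping only the linear term of $(f^*A+M)^k$ discards exactly the information you need.

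What is missing is a way to turn $h^0\ge p$ into volume without restricting to curves. The paper writes $bK_V=L_f+G$ with $G$ effective, which gives $h^0(MK_V)\ge p$ for $M=bm_k$. By Hacon--McKernan, Takayama and Tsuji, $|MK_W|$ is birational for every $k$-fold $W$ of general type. So the image of $V$ is a non-degenerate $k$-fold in $\PP^r$ with $r\ge p-1$. Its degree is at least $p-k$ by the degree--codimension bound, and at most $M^k\vol(V)$.

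You can also repair your own Step 3 directly, and this even avoids uniform birationality:
\begin{itemize}
\item Write $\mu^*|L|=|M|+F$ on a resolution $\mu\colon V'\to V$, let $Z$ be the image of $\phi_{|M|}$, and let $j=\dim Z\ge 1$.
\item Keep the $j$-th term of the expansion instead of the first. The class $M^j$ is represented by the fibres of $\phi_{|M|}$ over $\deg Z$ general points.
\item Because $f$ is birational onto its image, $\mu^*f^*A$ is big and nef on every component of a general fibre, so $(\mu^*f^*A)^{k-j}$ has degree at least $1$ there.
\item Since $Z$ spans $\PP^{h^0(L)-1}$, we have $\deg Z\ge h^0(L)-j$.
\end{itemize}
Hence $\vol(f^*A+L)\ge(\mu^*f^*A+M)^k\ge(\mu^*f^*A)^{k-j}\cdot M^j\ge\deg Z\ge h^0(L)-j\ge p-k$. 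Combined with your Step 2 this gives $\vol(V)\ge (p-k)/m_0^k$, with constant $1/m_0^k$ rather than the $k/m_0^k$ you claimed.
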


\noi This implies the first statement in Theorem
\ref{k-dim.p-conn.Thm} from the Introduction. To produce $p$-connecting
families whose volume is bounded above linearly in $p$, one can use
suitable complete intersections as in the proof of Theorem
\ref{Precise.Hypersurface.Thm}; see Remark
\ref{k-Dim.Lin.Upper.Bound}.

\begin{proof}[Proof of Theorem \ref{k-dim.linear.lower.bound}]
Fix a general point
\[
(t;q_1,\ldots,q_p)\in \VVV^p_{/S}
\]
at which $F^p$ is smooth, and put $V=V_t$ and $f=F|_V$. As in
\S\ref{Section.Connecting.Genus}, the differential of $F^p$, modulo the
tangent spaces to the fibres, gives a natural map
\begin{equation} \label{Map.to.Nf}
T_tS\lra\bigoplus_{i=1}^p N_f|_{q_i},
\end{equation}
where
$
N_f=\coker\big(T_V\lra f^*T_X\big)
$
is the normal sheaf to $f$. We may suppose that $f$ has rank $k$ at each
$q_i$. Thus $N_f$ is locally free of rank
\[
e=_{\mathrm{def}}n-k
\]
near the $q_i$, and the smoothness of $F^p$ implies that
\eqref{Map.to.Nf} is surjective. We can therefore choose, for every
$1\leq i\leq p$, vectors
\[
v_{i,1},\ldots,v_{i,e}\in T_tS
\]
whose images form a basis of $N_f|_{q_i}$ and vanish in $N_f|_{q_j}$ for
$j\neq i$.

Put
\[
L_f=K_V\otimes f^*K_X^{-1}.
\]
There is a canonical homomorphism
\[
\bigwedge^e N_f
\ \lra\
\det(f^*T_X)\otimes\det(T_V)^{-1}
\ =\
L_f,
\]
so we arrive at a homomorphism
\[
\bigwedge^e T_tS\lra H^0(V,L_f).
\]
(See \cite[\S 2]{LMP} for the details of a closely related
construction.) The element
$
\xi_i=v_{i,1}\wedge\cdots\wedge v_{i,e}
$
maps to a section $\delta_i\in H^0(V,L_f)$ satisfying
\[
\delta_i(q_i)\neq 0,
\qquad
\delta_i(q_j)=0\quad\text{for }j\neq i.
\]
Consequently, the sections $\delta_1,\ldots,\delta_p$ are linearly
independent, and hence
\begin{equation} \label{Crucial.Lower.Bound.L_f}
\hh{0}{V}{L_f}\geq p.
\end{equation}

We now argue that the theorem follows formally from
\eqref{Crucial.Lower.Bound.L_f}. Since $K_X$ is big, we may fix, once
and for all, a positive integer $b=b_X$ and a decomposition
\[
bK_X \lin H+E,
\]
where $H$ is an effective ample divisor and $E$ is effective, and these
choices are independent of $p$. Since the family $\VVV$ covers $X$, we
may suppose that the image of $f$ is not contained in
$\Supp(H+E)$. Then $f^*(bK_X)$ is represented by an effective and big
divisor on $V$. Now observe that
\[
bK_V
\ \lin\
bL_f+f^*(bK_X)
\ \lin\
L_f+\big((b-1)L_f+f^*(bK_X)\big).
\tag{*}
\]
Since $L_f$ is effective, it follows that
$
bK_V\lin L_f+G,
$
where $G$ is effective and big. Hence we see in the first place that
$V$ is of general type. Moreover, \eqref{Crucial.Lower.Bound.L_f} and
(*) imply that
$
h^0(bK_V)\geq p,
$
and hence also
\begin{equation} \label{Lower.Bound.bKV}
\hh{0}{V}{mbK_V}
\ \geq\
p
\end{equation}
for every integer $m>0$.

We next invoke the uniform pluricanonical birationality theorem of
Hacon--McKernan, Takayama, and Tsuji \cite{HM,Takayama}. This result asserts
that there exists an integer $m_k$, depending only on $k$, such that
$\linser{\ell K_W}$ defines a birational map for every smooth projective
$k$-fold $W$ of general type and every $\ell\geq m_k$. Put
$
M \ = \ bm_k.
$
Then $\linser{MK_V}$ defines a birational map. Moreover, taking
$m=m_k$ in \eqref{Lower.Bound.bKV}, we have
\[
\hh{0}{V}{MK_V}
\ \geq\
p.
\]

Resolving the base locus of $\linser{MK_V}$ yields a birational morphism
$\mu:V^\pr\lra V$ and a decomposition
\[
\mu^*\linser{MK_V}
\ =\
\linser{B}+D,
\]
where $D$ is effective and $\linser{B}$ is base-point free. Thus
$\linser{B}$ defines a morphism
\[
\phi
\ =\
\phi_{\linser{B}}
:
V^\pr\lra\PP^{r(B)}
\]
that is birational onto its image. We have
\[
h^0(V^\pr,B)
\ =\
h^0(V,MK_V)
\ \geq\
p,
\]
so $r(B)\geq p-1$. Therefore
\[
Z
\ =_{\mathrm{def}}\
\Image(\phi)
\ \subseteq\
\PP^{r(B)}
\]
is a non-degenerate $k$-dimensional subvariety whose degree is
$(B^k)_{V^\pr}$. The elementary degree--codimension inequality gives
\[
(B^k)_{V^\pr}
\ =\
\deg Z
\ \geq\
\codim_{\PP^{r(B)}}(Z)+1
\ =\
r(B)-k+1
\ \geq\
p-k.
\]
On the other hand,
\[
(B^k)_{V^\pr}
\ =\
\vol(B)
\ \leq\
\vol\big(\mu^*(MK_V)\big)
\ =\
M^k\vol(K_V).
\]
Combining these inequalities, and recalling that $M=bm_k$, we find
\[
\vol(V)
\ \geq\
\frac{p-k}{(bm_k)^k}.
\]
Thus we can take $a_{k,X}=1/(bm_k)^k$, completing the proof.
\end{proof}

\begin{remark} [\textbf{Bounds on geometric genus}]
We refer to \cite[Theorem B]{LMP} for lower bounds on the geometric genus  $h^0(K_V)$.
\end{remark}

\begin{remark}[\textbf{Construction of families with linear volume growth}]
\label{k-Dim.Lin.Upper.Bound}
For completeness, we indicate briefly the construction of connecting
subvarieties whose volumes are bounded above linearly in $p$. Fix a
very ample divisor $A$ on $X$, sufficiently positive that $K_X+A$ is
very ample, and put
\[
N_m
\ =\
\hh{0}{X}{mA}-1.
\]
Choose $m$ minimally so that
\[
p
\ \leq\
N_m-n+k+1.
\]
The same incidence argument used in Lemma~\ref{Bertini.Lemma} shows that
$p$ general points of $X$ lie on a smooth codimension $n-k$ linear
section of the embedding defined by $\linser{mA}$. Its inverse image
$V_m\subseteq X$ is a smooth complete intersection of $n-k$ divisors in
$\linser{mA}$. Since
\[
N_m
\ =\
\frac{(A^n)}{n!}m^n+O(m^{n-1}),
\]
the minimality of $m$ gives $m^n=O(p)$. On the other hand, adjunction
yields
\[
\vol(V_m)
\ =\
\bigl(K_X+(n-k)mA\bigr)^k\cdot(mA)^{n-k}
\ =\
O_{k,X}(m^n)
\ =\
O_{k,X}(p).
\qquad\qed
\]
\end{remark}

\begin{remark}[\textbf{Analogue of connecting gonality}]
\label{Higher.Dim.Gonality}
The $p$-connecting canonical volume is the natural generalization to
higher dimensions of the $p$-connecting genus for families of curves.
However, it is not clear to us whether there is a good analogue of
connecting gonality. One might of course consider the degree of
irrationality $\irrdeg(V_t)$ of a connecting family of subvarieties, or
some variant thereof, but it does not seem immediately obvious that the
picture from the curve case extends to subvarieties of dimension
$k>1$. \qed
\end{remark}

Finally, we indicate the extension to the present setting of the
asymptotic computations for hypersurfaces in
\S\ref{Hypersurfaces}. Consider then a smooth hypersurface
\[
X
\ =\
X_d
\ \subseteq\
\PP^{n+1}
\]
of degree $d$. We take
$
r(p)\asymp_n p^{\frac1{n+1}}
$
as in \S\ref{Hypersurfaces}.

\begin{theorem}\label{Higher.Dim.Hypsf.Asymptotics}
Fix $n$ and $k$ with $1\leq k<n$. There are constants
$c_{n,k},C_{n,k}>0$ such that if $p$ is sufficiently large in terms of
$k$, $d\geq n+3$, and $r(p)<d$, then the least canonical volume of a
$p$-connecting family of $k$-dimensional subvarieties of $X_d$ satisfies
\[
c_{n,k}\,p^{\frac{n-k}{n+1}}d^{k+1}
\ \leq\
\vol(V)
\ \leq\
C_{n,k}\,p^{\frac{n-k}{n+1}}d^{k+1}.
\]
In other words, in this range the minimum lies in
\[
\Theta_{n,k}\left(
p^{\frac{n-k}{n+1}}d^{k+1}
\right).
\]
\end{theorem}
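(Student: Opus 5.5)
Both bounds follow the curve case (Theorem \ref{Precise.Hypersurface.Thm}): complete intersections for the upper bound, and a Castelnuovo-type bound played against a covering-family inequality for the lower bound. Throughout, $r=r(p)$.

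\textbf{Upper bound.} Let $\nu_r:X_d\hookrightarrow\PP^{N_r}$ be the $r$-fold Veronese embedding. Since $r<d$, this is given by the complete series $|\OO_X(r)|$. The incidence argument of Lemma~\ref{Bertini.Lemma}, run with codimension $n-k$ linear sections as in Remark~\ref{k-Dim.Lin.Upper.Bound}, shows that $p$ general points lie on a smooth section once $p\le N_r-n+k+1$. By the choice of $r$ we have $N_r-n+2\ge p$, so this holds. The resulting $V$ is a complete intersection of $X_d$ with $n-k$ hypersurfaces of degree $r$. By adjunction $K_V=\OO_V((n-k)r+d-n-2)$, hence
\[
\vol(V)=\bigl((n-k)r+d-n-2\bigr)^k r^{n-k}d\le n^k d^{k+1}r^{n-k},
\]
using $r<d$. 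Since $r\le A_np^{1/(n+1)}$, this gives $\vol(V)\le C_{n,k}\,p^{\frac{n-k}{n+1}}d^{k+1}$.

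\textbf{Lower bound.} Let $V$ be a general member of a $p$-connecting family, let $H$ be the hyperplane class, and put $e=\deg f(V)=(f^*H)^k$.

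First, an analogue of $(*)$. The proof of Theorem~\ref{k-dim.linear.lower.bound} shows that $L_f=K_V-f^*K_X$ is effective, so $K_V\ge f^*K_X=(d-n-2)f^*H$ with $f^*H$ nef and big. This gives $\vol(V)\ge (d-n-2)^k e$. In fact I would use the slightly more robust mixed statement $\vol(V)\ge \vol(K_V-\text{eff})$ together with Khovanskii--Teissier to compare $K_V^{k-j}\cdot f^*H^{j}$-type quantities.

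Second, a Castelnuovo-type upper bound, which is the analogue of $(**)$. As in the curve case, $\nu_r(f(V))$ spans a space of dimension at least $p-1$ and has degree $r^k e$. Project generically to a non-degenerate $k$-fold $\Gamma\subseteq\PP^{p-1}$. Here I would invoke Proposition~\ref{cast}, the Castelnuovo bound on the canonical volume of a non-degenerate subvariety, which I expect to have the shape
\[
\vol(\Gamma)\le c_k\,\frac{(\deg\Gamma)^{k+1}}{(p-1-k)^{1/(k)}\cdots}.
\]
Its natural form should be obtained by cutting down to curve sections: the volume is roughly $\deg\cdot(2g_{\rm sect}-2)/\deg$ in scaling. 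Dimensionally the expected bound is $\vol\lesssim \deg^{k+1}/(\text{codim})^{k}$ in the range $\deg\gg\text{codim}$, which one can get by applying Castelnuovo to curve sections of $\Gamma$ in $\PP^{p-1-(k-1)}$ and bounding $K_V\cdot(\text{hyperplane})^{k-1}$, then using the Hodge index inequality $\vol(K_V)\cdot(\ldots)$. Thus
\[
\vol(V)\lesssim_k \frac{(r^ke)^{k+1}}{p^k}.
\]

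\textbf{Combining the two.} The two estimates give $(d-n-2)^k e\lesssim r^{k(k+1)}e^{k+1}/p^k$, so $e\gtrsim d\,p/r^{k+1}$. Substituting back,
\[
\vol(V)\gtrsim d^{k+1}\,p/r^{k+1}\asymp p^{\frac{n-k}{n+1}}d^{k+1}.
\]
The requirement that $p$ be large in terms of $k$ absorbs the shifts $p-1-k$ and the other constants.

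\textbf{Main obstacle.} The hardest step is the precise Castelnuovo-type volume bound. Specifically, one must convert a curve-section genus bound into a bound on $\vol(K_V)$ with the correct exponent of the codimension, while controlling the distinction between $K_V\cdot H^{k-1}$ and $\vol(K_V)$. To bridge them I would first try the inequality $\vol(K_V)\le (K_V\cdot H^{k-1})^k/(H^k)^{k-1}$, valid when $K_V$ is pseudo-effective with $H$ nef and big.
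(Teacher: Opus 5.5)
Your proposal is correct and follows the paper's proof essentially step for step. The upper bound uses complete intersections of $X_d$ with $n-k$ degree-$r$ hypersurfaces plus adjunction, and the lower bound plays $\vol(V)\ge (d-n-2)^k e$ (from the effectivity of $L_f$) against Proposition~\ref{cast}, applied to a birational projection of the $r$-fold Veronese image to $\PP^{p-1}$, which yields $\vol(V)\le (r^ke)^{k+1}/(p-k-1)^k$. Two small points: your garbled displayed guess for Proposition~\ref{cast} should simply read $\vol\le\delta^{k+1}/(N-k)^k$, and your suggested route to it (curve sections, Castelnuovo, and the Khovanskii--Teissier-type inequality for big $K$ against nef big $H$, which holds by Fujita approximation) is a valid alternative to the paper's passage to a minimal model.
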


The proof follows closely the arguments from
\S\ref{Hypersurfaces}. The most interesting new ingredient is an
analogue of Castelnuovo's bound for the volume of an arbitrary
non-degenerate subvariety of projective space.

\begin{proposition}[\textbf{Higher-dimensional Castelnuovo bound}]
\label{cast}
Let
\[
W
\ \subseteq\
\PP^N
\]
be a non-degenerate subvariety of degree $\delta$ and dimension $k<N$. Then
\[
\vol(W)
\ \leq\
\frac{\delta^{k+1}}{(N-k)^k}.
\]
\end{proposition}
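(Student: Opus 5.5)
The plan is to bound the volume by two ingredients. The first is a Khovanskii--Teissier type inequality comparing $\vol(K)$ with the mixed intersection $K\cdot H^{k-1}$, where $H$ is the hyperplane class. The second is Castelnuovo's genus bound applied to a general curve section of $W$. If $W$ is not of general type then $\vol(W)=0$ and there is nothing to prove, so assume $\vol(W)>0$.

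\emph{Setup.} Fix a resolution $\mu: W' \lra W$ and put $H=\mu^*\OO_{\PP^N}(1)$, $K=K_{W'}$. Then $H$ is nef and big with $(H^k)=\delta$, and $\vol(W)=\vol(K)$.

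\emph{Step 1: a Teissier-type inequality.} I claim
\[
\vol(K)\ \le\ \frac{(K\cdot H^{k-1})^k}{(H^k)^{k-1}}.
\]
For nef $K$ this is the Khovanskii--Teissier inequality $(A^k)(H^k)^{k-1}\le (A\cdot H^{k-1})^k$ for nef classes $A,H$. In general I would use Fujita approximation. For any $\eps>0$ there is a birational modification $\nu:W''\lra W'$ and a decomposition $\nu^*K\equiv A+E$ with $A$ nef, $E$ an effective $\QQ$-divisor, and $(A^k)\ge \vol(K)-\eps$. Applying Khovanskii--Teissier to $A$ and $\nu^*H$ gives $(A^k)\,\delta^{k-1}\le (A\cdot \nu^*H^{k-1})^k$. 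Moreover $A\cdot\nu^*H^{k-1}\le \nu^*K\cdot\nu^*H^{k-1}=K\cdot H^{k-1}$, because $E$ is effective and $\nu^*H$ is nef. Letting $\eps\to 0$ gives the claim; in particular $K\cdot H^{k-1}\ge 0$. I expect this step to be the main technical point: one must handle a big but non-nef $K$ correctly and avoid sign issues when $K\cdot H^{k-1}$ is small.

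\emph{Step 2: curve sections and Castelnuovo.} Let $C\subseteq W'$ be the intersection of $k-1$ general members of $|H|$. By Bertini, $C$ is a smooth irreducible curve that maps birationally onto a general codimension-$(k-1)$ linear section $\Gamma=W\cap\Lambda$, with $\Lambda\cong\PP^{N-k+1}$ and $\deg\Gamma=\delta$. Since a general hyperplane section of an irreducible non-degenerate variety of dimension $\ge 2$ is non-degenerate in the hyperplane, induction shows that $\Gamma$ is non-degenerate in $\PP^{N-k+1}$. Castelnuovo's bound \cite[p.~527]{GH}, used as in the proof of Theorem \ref{Precise.Hypersurface.Thm}, then gives
\[
2g(C)-2 \ <\ 2g(C)\ \le\ \frac{(\delta-1)^2}{N-k}\ \le\ \frac{\delta^2}{N-k}.
\]
When $N-k=1$ we instead use the trivial plane-curve bound $2g\le(\delta-1)(\delta-2)$, which is again at most $\delta^2$. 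By adjunction on $W'$, $2g(C)-2=(K+(k-1)H)\cdot H^{k-1}=K\cdot H^{k-1}+(k-1)\delta$. Hence
\[
K\cdot H^{k-1}\ \le\ \frac{\delta^2}{N-k}.
\]

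\emph{Step 3: conclusion.} Substituting into Step 1, and using $K\cdot H^{k-1}\ge0$, gives
\[
\vol(W)\ \le\ \frac{\delta^{2k}}{(N-k)^k\,\delta^{k-1}}\ =\ \frac{\delta^{k+1}}{(N-k)^k}.
\]
This is the assertion of Proposition \ref{cast}.
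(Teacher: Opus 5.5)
Your proof is correct. Its skeleton matches the paper's: adjunction on a general complete-intersection curve section, Castelnuovo's bound for that curve in $\PP^{N-k+1}$, the observation that removing an effective divisor can only lower the intersection with $H^{k-1}$, and the Khovanskii--Teissier inequality $(A\cdot H^{k-1})^k\ge (A^k)(H^k)^{k-1}$ for nef classes.

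The genuine difference is how you produce the nef class that carries the volume of $K$. The paper invokes \cite{BCHM} to pass to a minimal model $W_{\min}$ and pulls $K_{W_{\min}}$ back to a common resolution $Z$. Terminality then gives $K_Z=P+E$ with $P$ nef, $E$ effective and $(P^k)_Z=\vol(W)$ exactly, so no limit is needed. You instead use Fujita approximation, writing $\nu^*K\equiv A+E$ with $A$ ample and $(A^k)\ge \vol(W)-\eps$, and then let $\eps\to 0$.

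What each route buys: yours avoids the existence of minimal models, which is a much deeper input. It also proves the general inequality $\vol(D)\,(H^k)^{k-1}\le (D\cdot H^{k-1})^k$ for any big $D$ and nef $H$ on a smooth projective $k$-fold. The paper's route trades this generality for an exact decomposition with no approximation step.

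You also make explicit two points the paper passes over quickly. One is the non-degeneracy of the general curve section in $\PP^{N-k+1}$, via the standard hyperplane-section argument. The other is the case $N-k=1$, where Castelnuovo's bound reduces to the plane-curve genus formula.
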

 
\begin{proof}
When $W$ is smooth and $K_W$ is nef, related and more precise bounds on
the self-intersection number $(K_W^k)_W$ were established by Di Gennaro
\cite{DiGennaro}. The present argument mirrors his, but avoids these
extra hypotheses by resolving $W$ and passing to a minimal model. The
idea is to reduce to the classical bound for curves by taking general
hyperplane sections and then applying the inequalities of
Khovanskii--Teissier.

Turning to details, we may assume that $W$ is of general type, since
otherwise $\vol(W)=0$ and the assertion is trivially true. Let
\[
\mu
\, : \,
W^\pr
\lra
W
\]
be a resolution of singularities, and let $W_{\min}$ be a minimal model
of $W^\pr$, which exists thanks to \cite{BCHM}. Choose a smooth variety $Z$ resolving the resulting
birational map, so that we have a diagram
\[
\xymatrix{
& Z \ar[dl]_a \ar[dr]^b & \\
W^\pr \ar[dr]_\mu && W_{\min} \\
& W. &
}
\]
Put
\[
H
\ =_{\mathrm{def}}\
(\mu\circ a)^*\OO_W(1)
\qquad\text{and}\qquad
P
\ =_{\mathrm{def}}\
b^*K_{W_{\min}}.
\]
Thus $P$ is a nef and big $\QQ$-divisor on $Z$, while $H$ is globally
generated, and hence nef, as well as big. We have
\[
(H^k)_Z
\ =\
\delta,
\qquad
(P^k)_Z
\ =\
\vol(W).
\]
Moreover, since $W_{\min}$ has terminal singularities, one has
\[
K_Z
\ =\
P+E
\]
for an effective $b$-exceptional $\QQ$-divisor $E$.

Let
\[
C^\pr
\ =\
H_1\cap\cdots\cap H_{k-1}
\]
be the intersection of $k-1$ general members of $\linser{H}$. Then
$C^\pr$ is a smooth curve mapping birationally onto a general linear
curve section
\[
C
\ \subseteq\
W
\ \subseteq\
\PP^N.
\]
Thus $C$ is non-degenerate of degree $\delta$ in
$
\PP^{N-k+1}
$
and has geometric genus $g=g(C^\pr)$. Adjunction gives
\[
(K_Z\cdot H^{k-1})_Z
\ =\
2g-2-(k-1)\delta.
\]
Since $K_Z=P+E$, with $E$  effective, and $H$ is nef, it follows that
\begin{equation}\label{P.H.Bound}
(P\cdot H^{k-1})_Z
\ \leq\
2g-2-(k-1)\delta
\ \leq\
2g-2.
\end{equation}

Put $c=N-k$. Castelnuovo's bound, applied to the non-degenerate curve
$C\subseteq\PP^{c+1}$, gives
\[
2g-2
\ \leq\
\frac{\delta^2}{c}
\ =\
\frac{\delta^2}{N-k}.
\]
Thanks to \eqref{P.H.Bound}, this implies
\[
(P\cdot H^{k-1})_Z
\ \leq\
\frac{\delta^2}{N-k}.
\]
Finally, the Khovanskii--Teissier inequality for the nef divisors $P$
and $H$ on $Z$ (cf.~\cite[Corollary 1.6.3]{PAG}) yields
\[
\big((P\cdot H^{k-1})_Z\big)^k
\ \geq\
(P^k)_Z\cdot\big((H^k)_Z\big)^{k-1}
\ =\
\vol(W)\cdot \delta^{k-1}.
\]
Therefore
\[
\vol(W)
\ \leq\
\frac{\big((P\cdot H^{k-1})_Z\big)^k}{\delta^{k-1}}
\ \leq\
\frac{1}{\delta^{k-1}}
\left(\frac{\delta^2}{N-k}\right)^k
\ =\
\frac{\delta^{k+1}}{(N-k)^k},
\]
as required.
\end{proof}

\begin{proof}[Sketch of Proof of Theorem
\ref{Higher.Dim.Hypsf.Asymptotics}]
The argument is essentially the same as in the proof of Theorem
\ref{Precise.Hypersurface.Thm}. For the upper bound, take $V$ to be a
smooth complete intersection of $n-k$ divisors in
$\linser{\OO_X(r)}$ passing through the prescribed points, where
$r=r(p)$. Adjunction gives
\[
\vol(V)
\ =\
\big(d+(n-k)r-(n+2)\big)^k r^{n-k}d
\ \ll_{n,k}\
r^{n-k}d^{k+1}.
\]
Since $r\asymp_n p^{1/(n+1)}$, this proves the upper bound.

For the lower bound, let $V$ be a general member of any $p$-connecting
family, and put
$
e=\deg(V).
$
The effectivity of $K_V-f^*K_{X_d}$ established in the proof of
Theorem \ref{k-dim.linear.lower.bound}, together with the monotonicity
of volume, gives
\[
\vol(V)
\ \geq\
(d-n-2)^k e.
\]
On the other hand, the $r$-fold Veronese image of $V$ spans a projective
space of dimension at least $p-1$. After a suitable birational
projection to $\PP^{p-1}$, Proposition \ref{cast} yields
\[
\vol(V)
\ \leq\
\frac{(r^ke)^{k+1}}{(p-k-1)^k}.
\]
Combining these inequalities gives
\[
\vol(V)
\ \geq\
\frac{(d-n-2)^{k+1}(p-k-1)}{r^{k+1}}.
\]
Since $d-n-2\geq d/(n+3)$, while $p-k-1$ is bounded below by a
positive multiple of $p$ when $p$ is sufficiently large in terms of
$k$, the relation $r\asymp_n p^{1/(n+1)}$ yields
\[
\vol(V)
\ \gg_{n,k}\
p^{\frac{n-k}{n+1}}d^{k+1},
\]
as required.
\end{proof}

 %
 %
 %

 \end{document}